\documentclass[11pt, reqno]{amsart}
\usepackage{comment}

\usepackage{amsmath, amsthm, amssymb, mathrsfs, geometry, graphicx, tikz-cd, tikz}
\usetikzlibrary{decorations.pathreplacing}
\usepackage{hyperref}
\usepackage{booktabs}
\usepackage{placeins}
\usetikzlibrary{patterns}
\usetikzlibrary{calc}
\newtheorem{theorem}{Theorem}[section]
\newtheorem{lemma}[theorem]{Lemma}
\newtheorem{proposition}[theorem]{Proposition}

\theoremstyle{definition}
\newtheorem{definition}[theorem]{Definition}

\newtheorem{remark}[theorem]{Remark}

\title[HMS for Affine Log CY Surfaces]{Homological Mirror Symmetry for Affine Log Calabi-Yau Surfaces}
\author{Umut Varolg\"une\c{s}}

\address{
Department of Mathematics,
Ko\c{c} University,
Istanbul, T\"urkiye
}

\email{uvarolgunes@ku.edu.tr}

\begin{document}

\maketitle

\begin{abstract}
Let $U$ be a smooth complex affine log Calabi--Yau surface, and let
$\widehat{U}$ be its complete finite-type Liouville manifold, equipped
with its natural grading structure. We give an algorithm that constructs
a finite-type quasi-projective $\mathbb{Z}$-scheme $U^\vee$ such that
\[
D^\pi\bigl(\mathcal{W}(\widehat{U},\Bbbk)\bigr)
\cong
D^b\operatorname{Coh}(U^\vee_{\Bbbk})
\]
for every field $\Bbbk$. Our main contribution is to construct an almost
toric model encoded by an exact eigenray diagram and, using the
symplectic Torelli theorem for symplectic log Calabi--Yau pairs, to prove that this
model is grading-preserving strongly exact symplectomorphic to
$\widehat{U}$. The result then follows from the homological mirror
symmetry theorem of Hacking--Keating.
\end{abstract}

\section{Introduction}

Let $U$ be a (complex) affine log Calabi-Yau variety (resp. surface). This means that there exists a pair $(Y,D)$ with $Y$ a smooth projective complex variety (resp. surface), $D$ is a reduced and effective anticanonical divisor with at worst normal crossings singularities and $U\simeq Y\setminus D$ is affine. Let us call such pairs $(Y,D)$ log Calabi-Yau. We do not require $D$ to be maximally degenerate. 

In the surface case, we say that \begin{enumerate}
    \item $U$ is a Looijenga interior, if in any log CY compactification $(Y,D)$, $D$ has at least one node.
    \item $U$ is elliptic, otherwise (meaning $D$ is smooth).
\end{enumerate} It is well-known that these two cases have no overlap.

By classical results of Eliashberg-Gromov \cite{EG91} (see also \cite{CE}) $U$ defines a complete and finite-type Liouville manifold $(\widehat{U},\theta)$ well defined up to strongly exact symplectomorphism (which means a diffeomorphism $\phi$ such that $\phi^*\theta'=\theta+df$ with $f$ compactly supported). We grade $\widehat{U}$ using a holomorphic volume form on $U$ that has poles along the boundary of a log CY compactification (unique up to multiplication by a non-zero complex number). We can therefore define the $\mathbb{Z}$-graded wrapped Fukaya category $\mathcal{W}(\widehat{U},\Bbbk)$ where $\Bbbk$ is an arbitrary commutative ring. This category depends only on the deformation type of $U$ as in the following structural result, which is also standard.

\begin{theorem}
    Let $B$ be a connected smooth complex manifold. Let $\pi: \mathcal{Y} \to B$ be a proper holomorphic submersion. Let $\mathcal{D} \subset \mathcal{Y}$ be a divisor such that for each $b \in B$, the pair $(Y_b, D_b = \mathcal{D} \cap Y_b)$ is log Calabi-Yau. Assume that $\pi$ restricts to a submersion on each stratum of the stratification induced by the singularities of $\mathcal{D}$, and that each complement $X_b = Y_b \setminus D_b$ is an affine variety. Then, for any $b_0, b_1 \in B$: \begin{itemize}
        \item  The Stein manifolds $X_{b_0}$ and $X_{b_1}$ are Weinstein homotopic, and hence their Liouville completions $\widehat{X}_{b_0}$ and $\widehat{X}_{b_1}$ are strongly exact symplectomorphic.
        \item This exact symplectomorphism preserves the trivializations of the canonical bundles induced by the log Calabi-Yau condition up to homotopy.
        \item Consequently, there is an $A_\infty$-equivalence of $\mathbb{Z}$-graded wrapped Fukaya categories:$$\mathcal{W}(\widehat{X}_{b_0},\Bbbk) \simeq \mathcal{W}(\widehat{X}_{b_1},\Bbbk)$$
    \end{itemize}
\end{theorem}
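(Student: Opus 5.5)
The plan is to construct a single family of Weinstein structures over $B$ and to deduce all three bullets from it. Because $B$ is connected it suffices to treat $b_0,b_1$ joined by a smooth path $\gamma:[0,1]\to B$. Pulling the family back along $\gamma$ reduces us to $B=[0,1]$. The first step is to build a uniform plurisubharmonic exhaustion. The hypotheses that $\pi$ is a proper submersion and is submersive on each stratum of $\mathcal{D}$ make $(\mathcal{Y},\mathcal{D})\to[0,1]$ locally trivial as a pair, by a stratified Ehresmann argument (Thom's first isotopy lemma). This trivialization will not be holomorphic, so instead of transporting complex structures I choose the potential directly. Take a Hermitian metric $\|\cdot\|$ on the line bundle $\mathcal{O}(\mathcal{D})$ over the total space and a defining section $s$, and set
\[
\phi_b = -\log \|s\|^2\big|_{X_b}.
\]
Each $X_b$ is affine, so $D_b$ supports an ample divisor $A_b=\sum a_i D_{b,i}$ with all $a_i>0$ (Goodman's criterion). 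This positivity is an open condition. After shrinking the path into finitely many subintervals and using that the components of $\mathcal{D}$ are locally constant in $b$, I fix the coefficients $a_i$ on each subinterval. Then I choose metrics on $\mathcal{O}(\sum a_i\mathcal{D}_i)$ whose curvature restricts to a Kähler form on every fiber. This yields a smooth family of exhausting strictly plurisubharmonic functions $\phi_b$ on $X_b$, each of which is Stein.

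The second step, which I expect to be the main obstacle, is to control critical points at infinity uniformly in $b$, so that the family is a Weinstein homotopy in the sense of Cieliebak--Eliashberg. Near $D_b$, in local coordinates $z_1\cdots z_k=0$ adapted to the strata (available uniformly by the stratified submersion hypothesis), the function $\phi_b$ is $-\sum a_i\log|z_i|^2$ plus a smooth term. A direct computation then shows that $d\phi_b\neq0$ on a uniform punctured neighborhood of $D_b$. It follows that all critical points lie in a compact set $K\subset\mathcal{Y}\setminus\mathcal{D}$. After a generic $C^2$-small perturbation supported in $K$, the family becomes a generic one-parameter family of Morse functions with birth--death events. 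This is exactly a Weinstein homotopy of the Liouville domains $\{\phi_b\le c\}$ for large constant $c$. Changing the coefficients $a_i$ across subintervals is harmless, since convex combinations of such potentials remain plurisubharmonic with the same asymptotic behavior. By Cieliebak--Eliashberg, a Weinstein homotopy induces a strongly exact symplectomorphism of the completions $\widehat{X}_{b_0}\cong\widehat{X}_{b_1}$, via Moser's trick applied to the Liouville forms, with compactly supported primitive after completion. This proves the first bullet.

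For the second bullet, I use the relative log canonical bundle. The adjunction condition on each fiber, together with connectedness of $B$, gives a nowhere-vanishing section $\Omega$ of $K_{\mathcal{Y}/B}(\mathcal{D})$ over the path; on $[0,1]$ any line bundle is trivial, so $\Omega$ exists, and it is unique up to a $\mathbb{C}^*$-valued function. Its restriction to $X_b$ is the fiberwise holomorphic volume form. The symplectomorphism from the first step is isotopic, through the parallel transport of the Weinstein homotopy, to the identification of the $X_b$ given by the family. It therefore carries $\Omega_{b_0}$ to a form homotopic to $\Omega_{b_1}$ through nonvanishing sections, which means the induced gradings agree up to homotopy. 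The third bullet then follows formally: wrapped Fukaya categories are invariant under grading-preserving strongly exact symplectomorphisms of the completions, up to $A_\infty$-quasi-equivalence, because exact Lagrangians, Hamiltonians at infinity, and grading data all transport. This holds over any commutative ring $\Bbbk$, given the standard Pin structures.
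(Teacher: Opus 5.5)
The paper gives no proof of this statement. It quotes it as standard and says explicitly that it is not used anywhere. Your outline is the standard argument. For surfaces, the only case the paper needs, it goes through, and it matches what the paper itself does in its final section when it deforms $\lambda_{\mathrm{alg}}$ to $\lambda_Y$:
\begin{itemize}
\item the potential $-\log\lVert s_A\rVert^2$ for an ample divisor supported on the boundary;
\item no critical points near the normal crossings boundary;
\item the Cieliebak--Eliashberg completion equivalence;
\item gradings carried along diffeotopies starting at the identity.
\end{itemize}

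\textbf{The gap.} The theorem is stated for log Calabi--Yau pairs of arbitrary dimension, and in that generality Step~1 fails. You assert that because $X_b$ is affine, $D_b$ supports an ample divisor $\sum a_iD_{b,i}$ with all $a_i>0$. The version of Goodman's theorem without blow-ups, as the paper uses it, is a statement about smooth complete surfaces. In higher dimension Goodman only produces such a divisor after blowing up inside the boundary, and your assertion is false.

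\textbf{A counterexample.} Let $\bar{Y}=\{xy=zw\}\subset\mathbb{P}^4$ be the quadric cone with vertex $q$. Let $f\colon Y\to\bar{Y}$ be the projective small resolution obtained by blowing up the plane $\{x=z=0\}$, with exceptional curve $C\cong\mathbb{P}^1$. Put $D=f^*(H_1+H_2+H_3)$, where $H_1$ is a general hyperplane through $q$ and $H_2,H_3$ are general hyperplanes.
\begin{itemize}
\item A local computation in the two charts of the resolution shows that $f^*H_1$ is smooth: it is a copy of $\mathbb{F}_2$ containing $C$. Hence $D$ is a reduced simple normal crossings divisor.
\item Since $f$ is crepant, $D\in|f^*\mathcal{O}(3)|=|-K_Y|$.
\item Since $q\in\bar{D}$, we have $Y\setminus D\cong\bar{Y}\setminus\bar{D}$, which is affine.
\item However, $D_i\cdot C=H_i\cdot f_*C=0$ for every component. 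So no divisor supported on $D$ is ample, and your $\phi_b$ cannot be built as described, even for a constant family.
\end{itemize}

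\textbf{What you actually need.} Your later steps use something weaker than ampleness on $Y_b$. They need a metric on $\mathcal{O}(\sum a_iD_{b,i})$, with $a_i>0$, whose curvature is positive on $X_b$. Then $\phi_b$ is strictly plurisubharmonic there, and it has the local form $-\sum a_i\log|z_i|^2+\text{smooth}$, which is what gives $d\phi_b\neq 0$ near $D_b$. In the example, pulling back the Fubini--Study metric from $\bar{Y}$ achieves this.

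Positivity on the noncompact $X_b$ alone is not an open condition in $b$, however. So your argument by openness plus compactness of the path no longer applies. You would need either a separate construction of such potentials uniformly in $b$, for instance by carrying out Goodman's blow-up in the family and redoing the local analysis there, or a restriction to families in which every $D_b$ supports an ample divisor. The latter is automatic for surfaces.

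With that restriction, the rest of your proposal is fine: interpolating the coefficients across subintervals, generic perturbation inside a compact set, the Cieliebak--Eliashberg step, and transporting the relative log volume form along the isotopy.
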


We stress that this result is not actually used in any essential way in this paper - we could avoid even stating it. On the other hand, we felt like it might help some of our algebro-geometrically minded readers to make better sense of the content that will follow. With that in mind, we continue our formalization of the deformation invariance concept.

\begin{definition} If $U_0$ and $U_1$ are affine log Calabi-Yau varieties such that $U_i$ is isomorphic to $X_{b_i}$ for some $X_{b_0}$ and $X_{b_1}$ as above, then we call $U_0$ and $U_1$ deformation equivalent affine log Calabi-Yau varieties.
\end{definition}

Given a cohomologically unital $A_\infty$-category $\mathcal{A},$ we denote by $D^\pi(\mathcal{A})$ the triangulated category $H^0(\Pi(\mathrm{Tw}(\mathcal{A}))),$ where $\mathrm{Tw}$ denotes triangulated closure, $\Pi$ denotes idempotent closure (we could equivalently take $\mathrm{Perf}(\mathcal{A})$ instead of this two-step construction) and $H^0$ means taking the (triangulated) homotopy category associated to a pretriangulated $A_\infty$-category. If $U_1$ and $U_2$ are deformation equivalent affine log Calabi-Yau varieties, then there is an equivalence of triangulated categories: 
\[ D^\pi(\mathcal{W}(\widehat{U_1},\Bbbk)) \cong D^\pi(\mathcal{W}(\widehat{U_2},\Bbbk)). \]

\begin{definition}
    We say that a quasi-projective algebraic variety $U_\Bbbk^\vee$ over $\Bbbk$ is a $\Bbbk$-mirror of ${U}$ if there is a $\Bbbk$-linear exact equivalence of triangulated categories:
\[ D^\pi(\mathcal{W}(\widehat{U},\Bbbk)) \cong D^b \operatorname{Coh}(U^\vee_\Bbbk). \]
\end{definition}

Here is what we will prove in this paper.

\begin{theorem}\label{thm-main} Let $U$ be an affine log Calabi-Yau surface.
    There is a finite-type $\mathbb{Z}$-scheme $U^\vee$ that can be constructed algorithmically from $U$ whose base change to $\mathrm{Spec}(\Bbbk)$  is a $\Bbbk$-mirror of $U$ for $\Bbbk$ an arbitrary field.

Here is the algorithm.

\textbf{Case 1, $U$ is a Looijenga interior:} We first construct a toric model of $U$ (\cite{GHK15}, see the beginning of Section \ref{sec-toric-model} for our specific terminology). To construct the mirror $U^\vee$ we take the fan of the Toric Start from the previous sentence, which defines a finite-type $\mathbb{Z}$-scheme, and we make the same number of non-toric blowups on each boundary divisor but crucially always at the special point $-1\in\mathbb{G}_m$ iteratively. $U^\vee$ is obtained by removing the boundary divisor. 

\textbf{Case 2, $U$ is elliptic:} then it must have a log CY compactification $(Y,D)$ where $Y$ is a del Pezzo surface and $D$ is a smooth anticanonical divisor (an elliptic curve). Therefore, up to deformation equivalence there are $10$ elliptic affine log CY surfaces. 
\begin{enumerate}
    \item If $U= \mathbb{P}^1\times \mathbb{P}^1\setminus V_{2,2},$ with $V_{2,2}$ the vanishing locus of a generic bidegree $(2,2)$ polynomial, $U^\vee$ is obtained by doing a toric blowup at each corner of  $\mathbb{P}^1\times \mathbb{P}^1$, then doing a non-toric blow-up at $-1$ in each exceptional divisor and removing the boundary.
    \item Otherwise, we can distinguish the $9$ cases by looking at the self-intersection number $D^2$ and define $d=D^2\in \{1,\ldots,9\}.$ To construct the mirror, take the complete fan with rays  $(1, 0)$, $(7, 1)$, $(-41, -5)$, and $(2d - 17, -2)$ and do $9-d,1,1$ and $1$ non-toric blowups at $-1$ respectively. The complement of the boundary divisor gives our mirror finite-type $\mathbb{Z}$-scheme in this case. 
\end{enumerate}

\end{theorem}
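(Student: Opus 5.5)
The plan is to reduce Theorem \ref{thm-main} to the homological mirror symmetry theorem of Hacking--Keating. That theorem states the following: if a Liouville manifold $M$ is given by an almost toric fibration encoded by an (exact) eigenray diagram, i.e.\ a Delzant-type polygon/fan with nodes on rays, then the wrapped category of $M$ is equivalent to $D^b\operatorname{Coh}$ of the complement of the boundary in the blowup of the toric surface of that fan, with the blowups taken at $-1\in\mathbb{G}_m$ on the corresponding boundary divisors. Their proof is over $\mathbb{Z}$ and compatible with base change to any field. So the whole task is to produce, for every affine log CY surface $U$, an almost toric model $M_U$ whose eigenray diagram is exactly the combinatorial data in the statement. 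We then need to show that $M_U$ is grading-preserving strongly exact symplectomorphic to $\widehat{U}$. The $\mathbb{Z}$-scheme $U^\vee$ is then read off from the diagram, and the algorithmic nature is inherited from the algorithmic construction of toric models in \cite{GHK15}.

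\textbf{Looijenga case.} First pick a log CY compactification $(Y,D)$ with $D$ a cycle of rational curves, and after toric blowups pass to a toric model $(Y,D)\to(\bar Y,\bar D)$, which exists by \cite{GHK15}. Record the fan of $\bar Y$ and the numbers $n_i$ of non-toric blowups on each $\bar D_i$. On the symplectic side, build the almost toric fibration by taking the moment polygon of the fan and introducing $n_i$ nodal trades with eigenlines parallel to the ray of $\bar D_i$. Then complete the noncompact end so that it becomes an exact Liouville structure; this is what makes the diagram exact.

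Next, compactify this almost toric manifold to a symplectic log CY pair $(Y',D')$. We check that $(Y',D')$ and $(Y,D)$ have the same combinatorial invariants: the self-intersection sequence of $D$, the homology classes, and a common toric model. Since symplectic log CY pairs are classified up to deformation by this data, the symplectic Torelli theorem gives a symplectomorphism of pairs for suitably matched cohomology classes of symplectic forms. Restricting to complements and Liouville-completing then gives the strongly exact symplectomorphism. This needs the cohomology class to be chosen so that $[\omega]$ is dual to a positive combination of the $D_i$, which is the exactness condition. Gradings agree because both are induced by the log volume form with simple poles along $D$, and $H^1$ of the complement controls the remaining ambiguity. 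We check that ambiguity is trivial, or fixable by an automorphism, using that the volume form is canonical up to scaling.

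\textbf{Elliptic case.} Here $D$ is smooth, so there is no toric model. We use the fact that $\widehat{U}$ is determined by $d=D^2$ (together with the $\mathbb{P}^1\times\mathbb{P}^1$ versus $\mathrm{Bl}_1\mathbb{P}^2$ distinction at $d=8$). The proof is to exhibit explicit almost toric fibrations on the del Pezzo surface whose boundary is a nodal cycle that smooths to the elliptic curve, and then pass to complements. For the listed fan with rays $(1,0),(7,1),(-41,-5),(2d-17,-2)$ and the stated blowup numbers, we compute the boundary self-intersections and check the following: that the resulting Looijenga-type pair, after nodal slides and smoothing the boundary cycle, is a del Pezzo of degree $d$ with the complement of a smooth anticanonical curve; that the exact structure matches; and that the grading matches, arguing again via Torelli for the elliptic pair. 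The $\mathbb{P}^1\times\mathbb{P}^1$ case is handled directly in the same way.

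\textbf{Main obstacle.} The hard step is the identification of the almost toric model with $\widehat{U}$ as an exact and graded Liouville manifold, not merely as a symplectic manifold. Torelli gives a symplectomorphism of compact pairs. Upgrading it requires two things. First, controlling the Liouville form near $D$, i.e.\ the compatibility of the positive cohomology class with the exact eigenray diagram's wrapping data. Second, handling the elliptic case, where the boundary in the ATF is not the smooth elliptic curve and one must argue that smoothing the boundary preserves the complement's Liouville type. I would attempt the second via a Weinstein homotopy through the deformation family, as in the structural theorem stated above.
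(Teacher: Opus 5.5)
\textbf{The Looijenga case.} Your outline here follows the paper's route in broad terms: a toric model, the K\"ahler class $c_1(\mathcal O_Y(A))$ with $A=\sum\mu_iC_i''$, symplectic Torelli, and then a comparison of Liouville structures near the divisor. A few details need correcting or adding.
The nodes should come from almost toric blow-ups along Symington triangles, not from nodal trades.
You must also check that there is room for the triangles, i.e.\ $l_i>k_i\mu_i$, which follows from $A\cdot C_i''>0$.
Finally, you must arrange that all eigenlines pass through the single point $p$ at affine distance $\mu_i$ from the edges, which is what makes the diagram exact.

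\textbf{The gap: the elliptic case.} You propose an almost toric fibration whose boundary is a nodal cycle, and then a Weinstein homotopy through the deformation family to show that smoothing the boundary does not change the Liouville type of the complement. That claim is false. In a fixed $Y$, the complement of a cycle of $n$ rational curves has Euler characteristic $\chi(Y)-n$, while the complement of a smooth elliptic curve has Euler characteristic $\chi(Y)$. Compare $(\mathbb{C}^*)^2$ with $\mathbb{P}^2\setminus E$ for $E$ a smooth cubic. The structural deformation theorem also does not help: it requires the family to be submersive on every stratum of the boundary, which rules out smoothing a node.

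You have also placed the fan on the wrong side. The fan with rays $(1,0),(7,1),(-41,-5),(2d-17,-2)$ is the B-side fan. By the appendix, after resolving it and blowing up at $-1$, its compactification is a rational elliptic surface with boundary an $I_d$-fiber (so $D^2=0$). It is not a degree $d$ del Pezzo surface, so the check you propose would fail.

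\textbf{The missing idea.} On the A-side, the smooth boundary is produced by nodal trades. A nodal trade smooths a corner of the base and at the same time creates an interior node; this accounts for the $+1$ in $\chi$ per smoothed node. The paper proceeds as follows:
\begin{enumerate}
\item Start from the $\mathcal O(3)$ triangle of $\mathbb{P}^2$ (resp.\ the $\mathcal O(2,2)$ square) and nodal-trade at every corner, so that $D_{\mathrm{AT}}$ is already a smooth symplectic torus.
\item Apply Vianna mutations up to the $(29,5,2)$ triangle, whose long edge has affine length $87/10>8$ when the monotone point is at distance $1$ from the edges.
\item Insert $9-d$ Symington triangles on that edge, with apexes on the ray from $p$ parallel to it.
\item Shear $v_{\mathrm{right}}=(1,-2)$ across the $9-d$ cuts to obtain $(2d-17,-2)$.
\end{enumerate}
The three trade nodes are exactly the nodes on $(7,1)$, $(-41,-5)$ and $(2d-17,-2)$. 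Torelli is then applied directly to pairs with smooth torus boundary, using $\Psi([D''])=[D_{\mathrm{AT}}]$.

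\textbf{A secondary issue: gradings.} Gradings on $U$ form a torsor over $H^1(U;\mathbb Z)$, which is generally nonzero. So your argument via ``$H^1$ of the complement'' cannot show that the ambiguity is trivial. The paper instead uses $H^1(Y;\mathbb Z)=0$ for the compact rational surface to define a preferred grading from the compactification. The Torelli isomorphism preserves this grading, and the torus fibers are Maslov zero for it.
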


\begin{remark} The only computation in Case 1 is to find a toric model of $U$. The difficulty of this depends on how the affine log CY is presented to us. Most naively, the process involves finding explicit $-1$ curves on surfaces and contracting them, which may be computationally expensive (though theoretically algorithmic). On the other hand, there appear to be better alternatives. Not being an expert, we avoid further discussion and assume that a toric model can be effectively found.

For the Painlevé character varieties, which form a distinguished (but finite and small) family of examples covered by Theorem \ref{thm-main} Case 1, Beimler, Hu, Olsen, and Shende~\cite{BHOS} also establish homological mirror symmetry, by different methods, and identify the mirror with the minimal resolution of the corresponding unipotent Painlevé character variety.

\end{remark}
    
\begin{remark}    
    Hacking--Keating \cite[Section~6.3]{HK22}, using Vianna's work
\cite{Via17}, give another algorithm for Case~2. Here we give a more
uniform approach.

        Since the result is computationally heavy, it is worth doing the following sanity check. The resulting primitive vectors $(1, 0)$, $(7, 1)$, $(-41, -5)$, and $(2d - 17, -2)$ define our B-side toric fan. Following the algorithm, we execute $k = 9-d$ non-toric blowups on the divisor of $(1,0)$, and $1$ non-toric blowup on each of the other three boundary divisors. The complement of the boundary divisor gives our mirror finite-type $\mathbb{Z}$-scheme in this case. Resolving the complete fan via continued fractions and iteratively contracting all boundary $(-1)$-curves yields a blowdown cascade. As we show in the Appendix, for any $k \in \{0, \dots, 8\}$, the boundary becomes a cycle of exactly $9-k$ curves (irreducible but nodal for $k=8$), each with a self-intersection of $-2$ for $k\neq 8$ and with self-intersection $0$ if $k=8$. With a little more work, we show that indeed, the base change to complex numbers is a rational elliptic surface minus an $I_{d}$-fiber (compare for example with \cite{AKO06}).
\end{remark}

\subsection{Hacking--Keating Homological Mirror Symmetry}

We will prove Theorem \ref{thm-main} by showing that it is actually a special case of the main result of \cite{HK22}.

\begin{definition}
    Let us call a finite collection of rational slope rays in the Euclidean plane $\mathbb{R}^2$ starting at the origin, each equipped with a non-negative integer, a rational ray diagram. 
\end{definition}

Given a rational ray diagram $\mathcal{R}$, we can define a Liouville manifold $U_\mathcal{R}$ (well-defined up to strongly exact symplectomorphism) and a finite-type $\mathbb{Z}$-scheme $U^\vee_\mathcal{R}$. 

\subsubsection{Liouville manifolds from rational ray diagrams}
First, on each ray of $\mathcal{R},$ we choose its multiplicity many distinct points that are distinct from the origin. These are the nodes and we can equip the complement of the nodes in the plane with an integral affine structure. The complement of the union of subrays emanating from the nodes with its natural inclusion is an affine chart called the reference chart. This is extended to an integral affine atlas such that the affine monodromy around each node is a unipotent shear transformation by the primitive integral vector along the ray. The given subrays become eigenrays of their respective nodes. The signs are so that when a geodesic intersecting one of these eigenrays is drawn in the reference chart, as it passes through the eigenray, it always bends towards the node. Moreover, the more of these eigenrays the geodesic intersects, the sharper the bend. We call the nodal integral affine manifold $B_\mathcal{R}$. Note that all nodes lie on rays from the origin with those rays as invariant lines. 

\begin{remark}
    In \cite{GV23}, we defined an \emph{eigenray diagram} to consist of finitely many disjoint rational-slope rays in $\mathbb{R}^{2}$, together with finitely many marked points of positive integral multiplicity encoding the focus--focus nodes. It is called \emph{exact} if the affine lines containing all of its rays pass through a common point $b_{0}$ \cite[Remark~1.10]{GV23}. With this language in mind, we notice that $B_\mathcal{R}$ comes from an exact eigenray diagram.  
\end{remark}

We then construct a 4-dimensional symplectic manifold $M_{\mathcal{R}}$ equipped with a Lagrangian torus fibration $\pi: M_{\mathcal{R}} \to B_{\mathcal{R}}$. Over the regular locus $B_{\mathcal{R}}^{\text{reg}}$, the manifold is symplectomorphic to the standard action-angle bundle $M_{\mathcal{R}}^{\text{reg}}:=T^*B_{\mathcal{R}}^{\text{reg}} / \text{lattice}$. Standard neighborhood models of focus-focus singularities are then glued in over the nodes $\{b_i\}_{i \in I}$.

We now consider the \emph{radial vector field} $\sum_{j=1}^2 x_j \frac{\partial}{\partial x_j}$ in the reference chart, which extends uniquely to a smooth vector field on $B_{\mathcal{R}}^{\text{reg}}$. Any such vector field on the base lifts uniquely to a horizontal (with respect to the flat horizontal subbundle induced by the integral-affine structure) vector field on $M_{\mathcal{R}}^{\text{reg}}$. A simple computation shows that the horizontal lift of the radial vector field is a Liouville vector field on the total space.

This lift is undefined over the nodes but a relative de-Rham cohomology argument shows the existence of a Liouville vector field $Z$ that agrees with the horizontal lift of the radial vector field outside of arbitrarily small neighborhoods of the nodal tori.

\begin{proposition}\label{prop-R-Liouville-eq}
    Different choices made in the construction of $U_\mathcal{R}$ lead to strongly exact symplectomorphic complete and finite-type Liouville manifolds. 
\end{proposition}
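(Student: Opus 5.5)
The choices in the construction are: (i) the positions of the nodes on each ray (distinct points, away from the origin), (ii) the local focus--focus models and the gluing used to build $M_{\mathcal{R}}$, and (iii) the Liouville vector field $Z$ obtained from the relative de Rham argument, which only agrees with the horizontal lift of the radial field away from small neighbourhoods of the nodal tori. The plan is to show that any two sets of choices are joined by a smooth one-parameter family of complete finite-type Liouville structures that is uniformly controlled at infinity. Then I would invoke the standard fact (Gray/Moser-type stability for Liouville manifolds, as in Cieliebak--Eliashberg) that a Liouville homotopy through complete finite-type Liouville manifolds, with the skeleta staying in a fixed compact set, yields a strongly exact symplectomorphism.

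\textbf{Step 1: fix the nodes and vary $Z$.} Two admissible $Z$, $Z'$ have primitives $\theta,\theta'$ that agree with the radial lift's primitive $\theta_{\mathrm{rad}}$ outside a compact neighbourhood of the nodal tori. The difference $\theta-\theta'$ is closed. I claim it is exact with compactly supported primitive. It vanishes outside a compact set, so it defines a class in $H^1_c$ of a neighbourhood of the nodal fibres. A neighbourhood of a focus--focus fibre retracts onto the pinched torus, and by construction the relative class is killed (this is exactly the relative de Rham argument used to produce $Z$). I will check this step carefully: if necessary I will enlarge the neighbourhood to a union of disjoint balls around the nodal fibres, where the class vanishes. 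Given $\theta-\theta'=df$ with $f$ compactly supported, the identity map is already a strongly exact symplectomorphism.

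\textbf{Step 2: vary the local models.} Focus--focus neighbourhoods with the same affine monodromy and invariant line are symplectomorphic fibrewise after shrinking. The only invariant is the Vũ Ngọc germ, and a different germ can be absorbed by a compactly supported modification near the node. This gives a symplectomorphism equal to the identity outside a compact set, which is fibre-preserving near infinity. By Step 1 it is strongly exact.

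\textbf{Step 3: move the nodes.} Choose a path of node configurations, keeping the points on each ray distinct and away from the origin. For instance, slide them along their own rays; the ray diagram keeps all nodes on fixed invariant lines through the origin. This gives a smooth family $B_t$ of nodal integral affine manifolds. Because each eigenray is radial, the reference chart and the affine structure agree for all $t$ outside a large ball. The radial field is therefore the same there, and the total spaces $M_t$ form a locally trivial family, trivialised by the symplectic Ehresmann/Moser argument with compact support. The Liouville forms $\theta_t$ are then a homotopy that is fixed, and conical, near infinity.

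\textbf{Main obstacle: completeness and finite type along the homotopy.} I expect this to be the hardest part, and I would handle it through the radial structure. Outside a compact set the Liouville flow is the horizontal lift of $x\partial_x$, which is complete and expands every fibre over a large circle. This gives a fixed contact-type hypersurface, the preimage of a large circle transverse to the radial field, for all $t$. That hypersurface bounds a compact Liouville domain, so the completion is finite type and the standard stability result applies. The delicate point is that the radial field must stay transverse to the circle after conjugating across eigenrays. This holds because the monodromy shears fix the radial directions of the rays, so the radial field is globally well defined on $B^{\mathrm{reg}}_{\mathcal{R}}$, as stated before the proposition.
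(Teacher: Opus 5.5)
The paper gives no proof of this proposition. It is stated as part of the set-up, implicitly deferring to the standard almost toric / Hacking--Keating framework, so there is no argument of the paper's to compare your route with. Judged on its own, your outline is the natural one, and the key observation in Step~3 is right. Every eigenline passes through the origin, so the transition map across a ray at distance $r$ from the origin is the linear shear $A^{j(r)}$, where $j(r)$ is the number of nodes on that ray closer than $r$. Outside a ball containing all node positions along the path, the affine atlas is therefore literally unchanged.

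Two of your justifications need repair.

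In Step~1 the stated reason is wrong. A neighbourhood $N$ of a nodal fibre retracts onto a pinched torus $\simeq S^2\vee S^1$, so $H^1(N)\cong\mathbb{Z}$. Moreover, no ball contains a nodal fibre, since its fundamental class is the fibre class, which is nonzero in $H_2(M)$. The correct input is Poincar\'e--Lefschetz duality: $H^1_c(\mathrm{int}\,N)\cong H_3(N)=0$, or globally $H^1_c(M)\cong H_3(M)=0$, because $M$ is homotopy equivalent to a $2$-complex.

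In Step~3, compactly supported Moser needs $\dot\omega_t=d\beta_t$ with $\beta_t$ compactly supported, and you have not checked this. It follows once you first build primitives $\theta_t$, smoothly in $t$ and equal to $\theta_{\mathrm{rad}}$ outside a fixed compact set. Then $\iota_{X_t}\omega_t=-\dot\theta_t$ defines a compactly supported Moser field, and $\frac{d}{dt}\psi_t^*\theta_t=d(\psi_t^*\iota_{X_t}\theta_t)$. So $\psi_1^*\theta_1-\theta_0$ has a compactly supported primitive directly. No Liouville-homotopy theorem is needed, and your ``main obstacle'' is not really one: completeness and finite type are read off from the fixed radial end.

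The genuine gap is in Step~2. Let $N=\pi^{-1}(D)$ for a small disc $D$ around a node. You list the gluing maps among the choices but only deal with the V\~u Ng\d{o}c germ. Your conclusion, that any two choices are related by a symplectomorphism equal to the identity outside a compact set, is false in general.

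To see this, change the gluing over an annulus $A$ near $\partial D$ by a fibrewise translation $\tau_\sigma$, where $\sigma$ is a closed $1$-form on $A$ with $\int_{\partial D}\sigma\neq0$. Suppose $\Psi\colon N\to N$ were a symplectomorphism equal to $\tau_\sigma$ near $\partial N$. Take a section disc $S$, and let $\Sigma$ be the cylinder swept by $\partial S$ under $\tau_{s\sigma}$, $s\in[0,1]$. The cycle $S+\Sigma-\Psi(S)$ would have area equal to the flux $\pm\int_{\partial D}\sigma\neq0$. But $H_2(N)$ is spanned by the Lagrangian fibre class, so every closed cycle in $N$ has area zero.

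What rescues the proposition is that such a change destroys the existence of $Z$. The group $H_2(N,\partial N)\cong\mathbb{Z}$ is generated by a section disc, so the relative de Rham obstruction to extending $\theta_{\mathrm{rad}}$ over $N$ is $\int_S\omega-\int_{\partial S}\theta_{\mathrm{rad}}$. Since $\tau_\sigma^*\theta_{\mathrm{rad}}-\theta_{\mathrm{rad}}=\pm\bigl(d(\iota_E\sigma)-\pi^*\sigma\bigr)$, with $E$ the radial field, the change shifts this obstruction by $\pm\int_{\partial D}\sigma$. Hence two gluings that both admit $Z$ differ by an exact translation, $\sigma=dh$. Then $\tau_\sigma$ is the time-one map of $h\circ\pi$, which extends over $N$ once $h$ is extended over the disc.

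You need to add this argument before invoking Step~1; when the germs differ, apply it after Symington's local modification.
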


Finally, note that there is a unique grading structure on $U_\mathcal{R}$ which makes the Lagrangian torus fibers of $\pi_\mathcal{R}$ Maslov zero. We will equip $U_\mathcal{R}$ with this grading structure.

\subsubsection{Finite-type $\mathbb{Z}$-schemes from rational ray diagrams.}

We take the rank-two lattice $N:=\mathbb Z^2$, write $M=N^\vee$,
and choose a smooth complete fan $\Sigma$ in
$N_{\mathbb R}\cong\mathbb R^2$ whose one-dimensional cones include
the rays of $\mathcal R$. This defines a smooth toric
$\mathbb Z$-scheme $Y_\Sigma$. For each $\rho_j\in\Sigma(1)$, let
$D_j$ be the corresponding toric boundary divisor and let
$D_j^\circ=O(\rho_j)$ be its open torus orbit. Then
\[
D_j^\circ
=
\operatorname{Spec}\mathbb Z[M\cap\rho_j^\perp]
\cong\mathbb G_m.
\]
The rank-one lattice $M\cap\rho_j^\perp$ has two primitive generators;
the corresponding identifications of $D_j^\circ$ with $\mathbb G_m$
differ by inversion and hence determine the same distinguished section
$-1\in D_j^\circ(\mathbb Z)$.

    If the ray corresponding to $D_j$ has multiplicity $m_j>0$, we perform a sequence of $m_j$ infinitely near blow-ups: first we blow up the section
\[
p_{j,0}\colon \operatorname{Spec}\mathbb Z\longrightarrow D_j^\circ
\]
corresponding to $-1\in D_j^\circ(\mathbb Z)$; after the $\ell$-th blow-up, for $1\leq\ell<m_j$, we take the $(\ell+1)$-st center to be the intersection section of the strict transform of $D_j$ with the most recently created exceptional divisor. If $m_j=0$, no blow-up is performed.

 Let $\pi: \widetilde{Y} \longrightarrow Y_{\Sigma}$
be the resulting surface after performing all the blowups.
 Let $\widetilde{D}$ be the  strict transform of the original toric boundary divisor under the map $\pi$. The pair $(\widetilde{Y}, \widetilde{D})$ forms a log Calabi--Yau surface. Finally $U^\vee_\mathcal{R}$ is defined as $\widetilde{Y} \setminus \widetilde{D}$.

\subsubsection{Homological mirror symmetry for rational ray diagrams}

\begin{theorem}\label{thm-hk}
Let $\mathcal{R}$ be a rational ray diagram. Then, the base change $U^\vee_{\mathcal{R}, \Bbbk}$ of $U^\vee_\mathcal{R}$ to an arbitrary field $\Bbbk$ is a $\Bbbk$-mirror of $U_\mathcal{R}$:
\[ D^\pi(\mathcal{W}(U_\mathcal{R},\Bbbk)) \cong D^b \operatorname{Coh}(U^\vee_{\mathcal{R}, \Bbbk}). \]

\end{theorem}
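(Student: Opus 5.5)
This statement is essentially the main theorem of Hacking--Keating \cite{HK22}, rephrased in the language of rational ray diagrams. The plan is therefore to match our constructions of $U_\mathcal{R}$ and $U^\vee_\mathcal{R}$ with the input data and the two sides of their theorem, rather than to reprove homological mirror symmetry.

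\textbf{A-side.} Hacking--Keating start from a Looijenga pair $(Y^\vee,D^\vee)$ with a toric model. They build the symplectic side as a Weinstein manifold from a toric picture: a neighbourhood of the toric boundary in the moment polytope, with nodal trades (almost toric fibration data) performed along the rays. There is one node for each non-toric blowup, placed on the ray of the corresponding toric divisor. The first step is to check that, for a given $\mathcal{R}$, their almost toric base is integral-affine isomorphic to $B_\mathcal{R}$, after moving the nodes along their eigenrays. Their base has nodes on the rays of $\mathcal{R}$ with multiplicities $m_j$, monodromy given by shears along the ray directions, and eigenrays pointing outward. Sliding nodes along eigenrays does not change the total space up to symplectomorphism. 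I would then use Proposition \ref{prop-R-Liouville-eq}, together with the fact that the radial Liouville field is canonical away from the nodes, to conclude that their Liouville manifold is strongly exact symplectomorphic to $U_\mathcal{R}$.

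\textbf{Gradings and B-side.} Hacking--Keating grade using a volume form for which the torus fibres are Maslov zero. By the uniqueness remark above, this agrees with our grading structure. On the B-side, their mirror is exactly the surface obtained from the toric surface of the fan by iterated non-toric blowups at the point $-1$ of each boundary $\mathbb{G}_m$, with the boundary removed. This is our $U^\vee_\mathcal{R}$, and its construction is defined over $\mathbb{Z}$.

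\textbf{Main obstacle: the coefficient ring.} Hacking--Keating work over a field, or possibly over $\mathbb{Z}$ in a suitable form. I expect the hardest step to be ensuring that their equivalence holds for an arbitrary field $\Bbbk$. The first approach would be to inspect their proof. The generators are Lagrangian sections and Lefschetz thimbles, and on the mirror side these correspond to line bundles and exceptional-type sheaves. Both the $A_\infty$-structure computations and the sheaf $\operatorname{Ext}$ computations are defined over $\mathbb{Z}$, and the signs in the Fukaya category are what force the blowup point to be $-1$. Checking that all of these are characteristic independent would give the equivalence after base change to any $\Bbbk$.

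A secondary technicality is the smooth fan refinement. Toric blowups at corners change neither side: on the A-side they only add zero-multiplicity rays, and on the B-side they only blow up boundary corners, which leaves $U^\vee_\mathcal{R}$ unchanged. So we may choose the same refinement that Hacking--Keating use.
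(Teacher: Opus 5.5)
Your proposal is correct and takes the paper's route. The paper's proof just cites the third bullet of Hacking--Keating's Theorem~1.1 together with their Theorem~1.2, which identifies their total space $M$ with the almost toric total space whose nodes lie on the rays, i.e.\ with $U_\mathcal{R}$; for arbitrary fields $\Bbbk$ it relies on the authors' confirmation that their argument works unchanged, rather than on the characteristic-independence check you propose. One small correction: Hacking--Keating construct $M$ as the total space of a Lefschetz fibration, and the almost toric picture you describe enters only through their Theorem~1.2.
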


\begin{proof}
This follows from the third bullet point of
\cite[Theorem~1.1]{HK22}, combined with
\cite[Theorem~1.2]{HK22}. Although
\cite[Theorem~1.1]{HK22} is stated over $\mathbb{C}$,
Hacking and Keating have confirmed that the proof works without
change with coefficients in an arbitrary field $\Bbbk$. We therefore
apply their argument over $\Bbbk$, with algebraic mirror given by the
base change $U_{\mathcal R}^{\vee}\otimes_{\mathbb Z}\Bbbk$.
\end{proof}
  
\begin{remark}
    A natural question is whether we also have \[ D^\pi(\mathcal{W}(U_\mathcal{R})) \cong D^b \operatorname{Coh}(U^\vee_{\mathcal{R}}). \] We expect this to be true and hope to address it in future work, but the existing arguments in the literature do not prove this.
\end{remark}

\subsection{Rational ray diagrams associated to $U$}

Let $U$ be an affine log CY surface. We say that a rational ray diagram $\mathcal{R}$ models the symplectic geometry of $U$ if $U_\mathcal{R}$ is strongly exact symplectomorphic to $\widehat{U}$ in a way that preserves the grading data.

The contribution of the paper is to prove the following result, which immediately implies Theorem \ref{thm-main} by Theorem \ref{thm-hk}.

\begin{theorem}\label{thm-ray} Let $U$ be an affine log Calabi-Yau surface. In each case, the given rational ray diagram models the symplectic geometry of $U.$

\textbf{Case 1, $U$ is a Looijenga interior:} Consider any toric model of $U$. Take the one-dimensional cones of the fan of the Toric Start as the rays and the number of non-toric blowups on the corresponding boundary divisor component as the multiplicities.

\pagebreak
\textbf{Case 2, $U$ is elliptic:} 
\begin{enumerate}
    \item If $U= \mathbb{P}^1\times \mathbb{P}^1\setminus V_{2,2},$ take the four rays in the diagonal and anti-diagonal directions with multiplicity $1.$
    \item Otherwise, if $D$ is the boundary in a log CY compactification with self-intersection number $d\in \{1,\ldots,9\},$  take the complete fan with rays  $(1, 0)$, $(7, 1)$, $(-41, -5)$, and $(2d - 17, -2)$ and with multiplicities $9-d,1,1$ and $1$ respectively.
\end{enumerate}

\end{theorem}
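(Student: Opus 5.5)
The plan is to compactify the almost toric model $U_\mathcal{R}$ into a symplectic log Calabi--Yau pair $(Y_\mathcal{R},D_\mathcal{R})$, compare it with a symplectic log Calabi--Yau pair coming from a log CY compactification $(Y,D)$ of $U$, and then invoke the symplectic Torelli theorem for symplectic log Calabi--Yau pairs. Concretely, truncate the base $B_\mathcal{R}$ along a large convex polygon transverse to the radial vector field. Away from the nodes the region outside such a polygon is toric, so we can perform a symplectic cut along it, or more precisely along the edges determined by the rays of a smooth complete fan refining $\mathcal{R}$. The output is a closed symplectic $4$-manifold $Y_\mathcal{R}$ containing a cycle $D_\mathcal{R}$ of symplectic spheres over the boundary, and $Y_\mathcal{R}\setminus D_\mathcal{R}$ has a Liouville completion strongly exact symplectomorphic to $U_\mathcal{R}$ (using Proposition \ref{prop-R-Liouville-eq}). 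In Case 1 this compactification is, by the standard nodal trade / almost toric blowup dictionary, symplectic deformation equivalent to the toric surface of the Toric Start blown up non-torically $m_j$ times on each $D_j$. Hence its homology class data (self-intersections of boundary components, the class of $[D]$, the periods) match those of a toric model $(\widetilde Y,\widetilde D)\to(\bar Y,\bar D)$ of $U$, where $(\widetilde Y,\widetilde D)$ is obtained from $(Y,D)$ by toric blowups.

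Next, use exactness to pin down the symplectic form. Since $U$ is affine, $\widehat U$ arises from a Kähler form on $Y$ whose class is supported on $D$, i.e.\ $[\omega]=\sum a_i \mathrm{PD}[D_i]$ with $a_i>0$; such forms restrict to exact forms on $U$. On the model side, exactness of the eigenray diagram (all rays pass through the origin, the center of the radial Liouville field) gives the same property: $[\omega_\mathcal{R}]$ is Poincaré dual to a positive combination of boundary components, the coefficients being affine distances from the origin to the edges of the truncating polygon. We choose the polygon so that these coefficients agree with the $a_i$. Symplectic Torelli for log CY pairs (in the form: two symplectic log CY pairs with a homological identification preserving boundary classes and $[\omega]$ are symplectomorphic relative to the divisor) then gives a symplectomorphism of pairs $(Y_\mathcal{R},D_\mathcal{R})\cong(\widetilde Y,\widetilde D)$. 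Restricting to complements and using that Liouville completions of complements of ample-type divisors with matching exact classes are unique up to strongly exact symplectomorphism (the relative de Rham argument again), we obtain $U_\mathcal{R}\cong\widehat U$. For gradings, $H^1(U;\mathbb{Z})$-ambiguity is fixed by checking that the holomorphic volume form with poles along $D$ restricts to a form making the torus fibers Maslov zero. This holds because in the toric-model picture the SYZ-type fibers near the boundary are product tori on which $\Omega=d\log z_1\wedge d\log z_2$ has constant phase.

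For Case 2 we follow the same scheme, but now the divisor is smooth. After the cut we must also smooth the nodal cycle $D_\mathcal{R}$ into a smooth symplectic torus, and verify that the result is a del Pezzo surface of degree $d$ containing an anticanonical torus. The check is that the pair has $D^2=d$ and $b_2=10-d$, with the right classes. For this we use the Appendix computation: the boundary contracts to a cycle of $9-k$ curves of self-intersection $-2$ (or a nodal $0$-curve when $k=8$). Equivalently, the compactification is a rational elliptic surface minus an $I_d$ fiber, matching the known description of $\widehat U$ as the complement of an elliptic curve in a del Pezzo surface, e.g.\ via Auroux--Katzarkov--Orlov-type models. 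The $\mathbb{P}^1\times\mathbb{P}^1$ case is handled identically, with the four diagonal rays giving a boundary of four $(-2)$-curves. Then Torelli is applied to the smooth torus pair. The key point is that del Pezzo pairs with a smooth anticanonical torus are determined by degree (plus the $\mathbb{F}_0$ versus $\mathbb{F}_1$ distinction at $d=8$) and by the exact class.

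The main obstacle I expect is the Torelli step with the exactness normalization. We must ensure the homological identification can be chosen to send the model's boundary classes to those of $D$, and that the cohomology class matches exactly rather than up to scaling. We must also ensure Torelli produces a symplectomorphism that restricts to a strongly exact one on the completion. I would first try to reduce this to a deformation-through-exact-forms argument: the space of admissible $[\omega]$ supported on $D$ is a convex cone, so Moser plus Liouville-completion uniqueness handles the remaining interpolation. Case 2 additionally requires the smoothing of the nodal boundary to be carried out compatibly with the Liouville structure.
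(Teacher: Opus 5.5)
Your Case~1 runs the paper's argument in the opposite direction. The paper builds the compact base from the moment polygon of $H_X$ and reads $\mathcal R$ off it; you truncate $B_{\mathcal R}$ instead. Your observation that exactness gives $[\omega_{\mathcal R}]=\sum_i a_i\,\mathrm{PD}[D_i]$, with $a_i$ the affine distances from the origin, is a clean way to get the class matching under $\Psi$. However, you assert rather than show that a truncation exists with prescribed $a_i$, all nodes inside, and all $\sum k_i$ Symington triangles disjoint. That is exactly what Theorem~\ref{thm-tri-pack} and the Resolving Triangle Packing Lemma supply, using $l_i>k_i\mu_i$ from ampleness. You also need to orthogonalize the algebraic pair, tracking the Liouville form, before invoking the Torelli theorem, which is stated for orthogonal pairs.

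The genuine gap is Case~2. You truncate $B_{\mathcal R}$ with corners, get a cycle of spheres, and identify the result using the Appendix. But the Appendix computes the B-side $U^\vee_{\mathcal R}$, not a compactification of $U_{\mathcal R}$. A rational elliptic surface minus an $I_d$ fiber contains compact curves (the other fibers), so it is not affine and is not a del Pezzo surface minus an elliptic curve; it is the mirror of $\widehat U$, not $\widehat U$.

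Worse, such a compactification contradicts your own exactness step. Suppose the boundary components contract, by blowing down boundary $(-1)$-curves, to an $I_d$ cycle, as you claim. Then their span lies in $\pi^*\langle I_d\rangle\oplus\bigoplus_\ell\mathbb Z E_\ell$ and is negative semidefinite. On the other hand, $[\omega]=\sum a_i\,\mathrm{PD}[D_i]$ with $a_i>0$ and $\omega(D_j)>0$ for every $j$ would give $\bigl(\sum_i a_iD_i\bigr)^2=\sum_j a_j\,\omega(D_j)>0$. So no truncation with corners is convex for the radial Liouville field here. Your Case~2(1) has the same problem: a boundary cycle of $(-2)$-curves cannot support a positive class.

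Smoothing the nodes of $D_{\mathcal R}$ afterwards does not rescue this. Smoothing a boundary node is a nodal trade, which adds a node and an eigenray, so the complement of the resulting torus is no longer $U_{\mathcal R}$.

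The missing idea is that in the elliptic case the compact base has \emph{no corners}. The rays of $\mathcal R$ pointing at the polygon's vertices carry the nodal-trade nodes, so the boundary is a smooth affine circle and $D_{\mathrm{AT}}$ is already a smooth symplectic torus. For Case~2(1) this is the $\mathcal O(2,2)$ square of $\mathbb{P}^1\times\mathbb{P}^1$ with all four corners traded.

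For Case~2(2) you need an explicit almost toric model of the degree-$d$ pair that produces the four given vectors; your proposal never connects these vectors to the del Pezzo surface. The paper's route is:
\begin{enumerate}
\item Trade all three corners of the $\mathcal O(3)$ triangle of $\mathbb{CP}^2$.
\item Mutate to the $(29,5,2)$ Vianna triangle. Its long edge has affine length $8.7>8$, so $k=9-d$ disjoint size-one Symington triangles fit with apexes on the ray through the monotone point parallel to that edge.
\item Read off the vertex eigenrays $(7,1)$, $(-41,-5)$, $(1,-2)$.
\item Shear $(1,-2)$ across the $k$ parallel cuts to get $(1-2k,-2)=(2d-17,-2)$.
\end{enumerate}
Only then does Torelli apply to the smooth-torus pair with $\Psi([D''])=[D_{\mathrm{AT}}]$. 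The grading check in this case also needs the residue-form argument along the smooth torus, not the $d\log z_1\wedge d\log z_2$ argument at a crossing.
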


\paragraph{\textbf{Organization of the paper.}}
Section \ref{sec2} constructs the rational ray diagram associated to an affine log Calabi--Yau surface. After establishing the required generalized toric models and constructing suitable moment polygons and Symington triangles, we treat the Looijenga and elliptic cases separately and obtain the diagrams appearing in Theorem \ref{thm-ray}. We then compare the resulting almost toric models with the algebraic compactification using the Torelli theorem for symplectic log CY surfaces and prove that the obtained diagrams indeed model the symplectic geometry of the affine surfaces. This proves Theorem \ref{thm-ray} and hence Theorem \ref{thm-main}. The Appendix carries out the toric resolution, boundary blowdown, and period calculations used to identify the mirrors in the elliptic case as rational elliptic surfaces with an $I_d$-fiber removed.

\paragraph{\textbf{Acknowledgements.}} The author thanks Ailsa Keating for helpful email correspondence confirming that the Hacking--Keating homological mirror symmetry argument in \cite{HK22} works with coefficients in an arbitrary field. This work was supported by the Scientific and Technological Research Council of Türkiye (TÜBİTAK) through the 3501 Career Development Program (project no.\ 124F451). The author used large language models, including ChatGPT and Gemini, as auxiliary tools for proofreading, improving the exposition, and critically reviewing the arguments. The author takes full responsibility for all mathematical content and conclusions of the paper.

\section{Constructing a rational ray diagram associated to $U$}\label{sec2}

\subsection{Toric models of affine log CY surfaces}\label{sec-toric-model}

A \emph{generalized toric model} for an affine log CY surface $U$ is a log CY pair $(Y, D'')$ such that $U \cong Y \setminus D''$, constructed via the following geometric sequence (which is part of the data):

\textbf{Toric Start:} Start with a smooth projective toric surface $X$ and let $D_X$ be its toric boundary forming the log CY pair $(X, D_X)$.

\textbf{Nodal Smoothing:} Perform complex deformations of $D_X$ in the anticanonical linear system to smooth some chains, or the entire cycle, of toric boundary components $I_i \subset D_X$, yielding the log CY pair $(X,D')$ with a partially smoothed boundary $D' = \sum_{i=1}^m C_i$.

\textbf{Non-Toric Blowups:} Perform $k_i \ge 0$ non-toric blowups on the smooth open locus of each $C_i$ to yield the final surface $Y$, with strict transform boundary $D'' = \sum_{i=1}^m C''_i$.

If nothing is done in the Nodal Smoothing step, we call this a \emph{toric model}.

\begin{theorem}\label{thm-toric-model}

Every affine log CY surface $U$ admits a generalized toric model where $(X, D_X)$ satisfies the following:

\begin{enumerate}

\item If $D''$ is smooth, then $X$ is isomorphic to $\mathbb{P}^2$ or $\mathbb{P}^1\times \mathbb{P}^1$ (with $D'$ a smooth anticanonical curve, necessarily of genus $1$).

\item If $D''$ is not smooth, we have a toric model.

\end{enumerate}

\end{theorem}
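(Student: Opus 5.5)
The plan is to split according to whether $D''$ is smooth or singular, following the dichotomy between elliptic interiors and Looijenga interiors from the introduction. Fix a log CY compactification $(Y,D)$ of $U$ with $U\cong Y\setminus D$.

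\textbf{Singular boundary.} Suppose $D$ has a node. Then $D$ is either a cycle of smooth rational curves or an irreducible nodal rational curve. Since $U$ is affine, $D$ supports an ample divisor, so $D$ is connected and nonempty. First make toric blowups at nodes of $D$; this does not change $U$. In particular we can arrange that $D$ has at least three components, so it is a genuine cycle. Then apply the theorem of Gross--Hacking--Keel \cite{GHK15} on the existence of toric models: after further toric blowups, there is a birational morphism $(Y',D')\to(\bar Y,\bar D)$ onto a smooth toric pair. This morphism is a sequence of blowdowns of interior $(-1)$-curves, each meeting $D'$ transversally at a single smooth point of one component. Reversing this sequence gives exactly the Toric Start followed by the Non-Toric Blowups, with the Nodal Smoothing step empty. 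Since toric blowups at nodes do not change the complement, we still have $Y'\setminus D'\cong U$. This gives case (2).

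\textbf{Smooth boundary.} Suppose every compactification has smooth $D$. Then $D$ is a smooth anticanonical curve of arithmetic genus $1$, so $p_a(D)=1$ and $D$ is an elliptic curve. Affineness of $U$ gives $D$ big and nef. The key claim is that $D^2>0$. Indeed, $D$ is irreducible and, by Zariski's lemma applied to a suitable multiple, $D$ is ample on the minimal model after contracting any curves disjoint from $D$. However, $U$ affine implies that no complete curves lie in $U$, so no curves are disjoint from $D$, and hence $D=-K_Y$ is ample. Therefore $Y$ is a del Pezzo surface. By the classification, $Y$ is $\mathbb{P}^1\times\mathbb{P}^1$ or the blowup of $\mathbb{P}^2$ at $9-d$ points in general position. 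In the second case, blowing down the exceptional curves $E_i$ gives points $p_i\in\mathbb{P}^2$ lying on the image cubic $C$. This holds because $-K_Y\cdot E_i=1$, so $D$ meets each $E_i$ at one point, transversally. The image $C$ is a smooth cubic, which is an anticanonical deformation of the toric triangle, so it is the smoothing of the whole cycle $D_X$. The $9-d$ blowups at points of $C$ are then the non-toric blowups on the single component $C_1=C$. For $\mathbb{P}^1\times\mathbb{P}^1$, we take $X=\mathbb{P}^1\times\mathbb{P}^1$ with no blowups.

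\textbf{Main obstacle.} The hardest point is the passage from affineness with smooth $D$ to $D$ being ample. Two things need care. First, bigness: for a surface, having $U$ affine means the boundary is the support of an effective ample divisor, and with $D$ irreducible that divisor is $mD$, so $D$ itself is ample. That observation essentially settles it. Second, we must check that a Looijenga interior cannot also have a compactification with smooth boundary, which was noted as well known, so the two cases of the statement are consistent. If needed, we would cite the standard fact that toric blowups and blowdowns connect any two compactifications through boundary modifications.
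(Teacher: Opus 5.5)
Your proof is correct. In the nodal case it is the paper's argument: the paper just cites \cite[Proposition~1.3]{GHK15}, which already includes the preliminary toric blowups you perform by hand.

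In the smooth case you take a genuinely different route.
\begin{itemize}
\item \textbf{The paper's route.} It first proves that $Y$ is rational. Goodman's ample divisor $A$ supported on $D''$ gives $K_Y\cdot A<0$, hence $\kappa(Y)=-\infty$, and irrational ruled surfaces are excluded. It then runs the MMP. Each $(-1)$-curve $E$ has $E\cdot D''=1$ and is not a component of the elliptic curve $D''$, so contracting it undoes a non-toric blowup. The process ends at $\mathbb{P}^2$ or $\mathbb{F}_n$ with $n\neq 1$. The case $n\geq 2$ is excluded by $D'\cdot C_0=2-n$, together with the observation that for $n=2$ the curve $C_0$ would give a complete curve inside $U$.
\item \textbf{Your route.} You note that a smooth connected $D$ is irreducible, so Goodman's effective ample divisor is $mD$ and $-K_Y=D$ is ample. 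Then $Y$ is del Pezzo and rationality comes for free. The model is read off from the del Pezzo classification using $-K_Y\cdot E_i=1$.
\end{itemize}
Your route is shorter and gives the stronger fact that $Y$ itself is del Pezzo, as asserted in Case~2 of the introduction. The price is invoking the full classification of del Pezzo surfaces. The paper only needs the classification of minimal rational surfaces and elementary intersection numbers.

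Two cleanups:
\begin{itemize}
\item The sentence invoking Zariski's lemma is not a valid justification. Replace it with the Goodman argument, or with this: $U$ affine forces $D$ to be big and to meet every other curve, so an irreducible $D$ has $D^2>0$ and is ample by Nakai--Moishezon.
\item You do not need to check that a Looijenga interior has no smooth compactification. The statement only asks for one model satisfying the two conditional clauses, so just use a nodal compactification whenever one exists.
\end{itemize}
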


\begin{proof}

Let $(Y,D'')$ be a log Calabi--Yau compactification such that $U=Y\setminus D''$ is affine. Since $Y$ is smooth, Goodman's theorem \cite[Theorem~2]{Goodman1969} implies that $D''$ supports an ample divisor. Let \(A\) be an ample divisor supported on \(D''\). Then
\[
K_Y\cdot A=-D''\cdot A<0.
\]
Consequently no positive multiple of \(K_Y\) is effective, and hence
\(\kappa(Y)=-\infty\). Hence $Y$ is either rational or ruled over a curve of positive genus. In the latter case, an elementary intersection calculation on the relatively minimal ruled model shows that no divisor supported on the anticanonical divisor can be ample. Therefore $Y$ is rational.

First, note that case (2) is a special case of \cite[Proposition 1.3]{GHK15}. 

Now, assume that $D''$ is smooth. We run the Minimal Model Program (MMP) on $Y$ contracting $(-1)$-curves. By the adjunction formula on the smooth surface $Y$, any $(-1)$-curve $E$ satisfies:
\[
-2 = 2p_a(E) - 2 = E^2 + K_Y \cdot E = -1 + K_Y \cdot E \implies -K_Y \cdot E = 1.
\]

Since $(Y, D'')$ is log CY, $D'' \sim -K_Y$, which forces $E \cdot D'' = 1$. This proves that every $(-1)$-curve is either a component of the boundary or intersects the boundary transversely exactly once. Since $D''$ is a smooth elliptic curve, the former is not possible. Contracting $E$ reverses a non-toric blowup on the boundary.

Iterating this process strictly decreases the Picard rank and terminates in a minimal rational log CY pair $(X, D')$ where the surface $X$ contains  no $(-1)$-curves and $D'$ is also a smooth elliptic curve. By the classification of minimal rational surfaces, $X$ must be isomorphic to either $\mathbb{P}^2$ or a Hirzebruch surface $\mathbb{F}_n$ with $n \neq 1$. 

We want to prove that $X=\mathbb{F}_n$ with $n\geq 2$ is not possible. Let $C_0$ be the unique negative curve with $C_0^2 = -n$, and let $f$ be the fiber class. The anticanonical class is $-K_X \sim 2C_0 + (n+2)f$.

We evaluate the intersection number of $D'$ with $C_0$:
\[
D' \cdot C_0 = (2C_0 + (n+2)f) \cdot C_0 = 2(-n) + (n+2) = 2 - n.
\]

Since $C_0$ cannot be an irreducible component of $D'$, we have $D' \cdot C_0\geq 0$. Therefore, $n\leq 2.$ In fact, if $D' \cdot C_0= 0$,  $C_0$ is disjoint from $D'$. Its proper transform in $Y$ would therefore be a complete curve
contained in the affine surface $U=Y\setminus D''$, which is
impossible. Therefore, $n=2$ is also not possible and we are done.

\end{proof}

\begin{proposition}
Let $U$ be an affine log CY surface which admits a generalized toric model with $X=\mathbb{P}^2$ or $\mathbb{P}^1\times \mathbb{P}^1$ such that the final boundary $D''$ is a smooth curve. Then the number of non-toric blowups $k$ on $D'$ satisfies the  bounds presented below.

\begin{table}[htbp]
\centering
\renewcommand{\arraystretch}{1.8}
\begin{tabular}{@{}ccllc@{}}
\toprule
\textbf{Surface} &  \textbf{Boundary} & \textbf{Self-intersection} & \textbf{Bound} \\
\midrule
$\mathbb{P}^2$ &Smooth Cubic & $9$ & $\displaystyle k \le 8$ \\
$\mathbb{P}^1\times \mathbb{P}^1$& Smooth $(2,2)$ curve & $8$ & $\displaystyle k \le 7$ \\
\bottomrule
\end{tabular}

\label{tab:smooth_bounds}
\end{table}

Moreover, for $k=2,\ldots, 8$, the affine varieties coming from $\mathbb{P}^2$ with $k$ blowups and those coming from $\mathbb{P}^1\times \mathbb{P}^1$ with $k-1$ blowups are the same. 
\end{proposition}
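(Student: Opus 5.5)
The plan is to use the fact that $D''$ is smooth and anticanonical, so $(D'')^2=K_Y^2$, and that a blowup at a point of the boundary lowers $K^2$ by exactly one. The boundary self-intersections $9$ and $8$ in the table are simply $K_X^2$ for $X=\mathbb{P}^2$ and $\mathbb{P}^1\times\mathbb{P}^1$. After $k$ non-toric blowups at points of $D'$, the strict transform satisfies $(D'')^2=K_X^2-k$, since $D''\sim -K_Y$ and each blowup centre lies on the (smooth) boundary. The upper bound on $k$ then comes from the affineness of $U$. By Goodman's theorem, as used in the proof of Theorem \ref{thm-toric-model}, $D''$ supports an ample divisor. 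Since $D''$ is irreducible, that ample divisor is a positive multiple of $D''$, so $(D'')^2>0$. This gives $k\le 8$ for $\mathbb{P}^2$ and $k\le 7$ for $\mathbb{P}^1\times\mathbb{P}^1$.

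For the \emph{Moreover} part, I would use the classical observation that blowing up $\mathbb{P}^1\times\mathbb{P}^1$ at one point equals blowing up $\mathbb{P}^2$ at two points. Concretely, start from $(\mathbb{P}^1\times\mathbb{P}^1, D')$ with $D'$ a smooth $(2,2)$ curve, and blow up a point $p\in D'$. The strict transforms $\tilde f_1,\tilde f_2$ of the two rulings through $p$ are disjoint $(-1)$-curves. Each meets the boundary transversely once, since $(2,2)\cdot f_i=2$ and we subtract the multiplicity at $p$. Contracting them gives a surface with Picard rank $1$, namely $\mathbb{P}^2$. The image of the boundary is a smooth anticanonical curve, i.e.\ a smooth cubic, and the contraction reverses two non-toric blowups at distinct points of the cubic. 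The remaining $k-2$ blowups are at points of the boundary away from the contracted curves, so they transport directly. Hence ``$\mathbb{P}^1\times\mathbb{P}^1$ with $k-1$ blowups'' gives the same pair $(Y,D'')$, and so the same $U$, as ``$\mathbb{P}^2$ with $k$ blowups''.

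For the converse direction, start from $\mathbb{P}^2$ with $k\ge 2$ blowups at points $q_1,q_2$ of the cubic. Here I need the two points to be distinct and the line through them to be available for contraction. After a deformation, or directly if the points are already generic, the strict transform of the line $\overline{q_1q_2}$ is a $(-1)$-curve meeting the boundary once. Contracting it yields $\mathbb{P}^1\times\mathbb{P}^1$ blown up once, and then proceeding as above gives the identification.

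The main obstacle is the meaning of ``the same''. If it means \emph{deformation equivalent}, I would justify moving the blowup centres generically using connectivity of the parameter spaces $D'^{\,k}$ together with the moduli of the cubic. Smooth affine elliptic pairs with fixed $(D'')^2$ then form a single deformation class, since the relevant configuration spaces are connected; the ampleness of $D''$, which is equivalent to affineness, is an open condition. If instead it means \emph{isomorphic} for given data, I would run the explicit contraction above. The infinitely-near cases, such as $q_2$ lying on the exceptional curve over $q_1$, need separate handling: there the line through $q_1$ in the tangent direction plays the role of $\overline{q_1q_2}$. The one remaining check is that no other curves disjoint from $D''$ appear, but affineness already rules this out.
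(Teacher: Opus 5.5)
Your proof is correct. The bound is argued as in the paper: the paper's appeal to Nakai--Moishezon amounts to your observation that an ample divisor supported on the irreducible curve $D''$ is a positive multiple of it, so $(D'')^2=K_X^2-k>0$.

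For the \emph{Moreover} part the routes differ. The paper invokes the classification of del Pezzo surfaces. Here $Y$ is del Pezzo because $-K_Y=D''$ is ample, and every $(-1)$-curve meets $D''$ exactly once, so any contraction to $\mathbb{P}^2$ or $\mathbb{P}^1\times\mathbb{P}^1$ is automatically a sequence of boundary blow-downs. You instead prove the relevant piece of that classification by hand, via the elementary transformation $\mathrm{Bl}_1(\mathbb{P}^1\times\mathbb{P}^1)\cong\mathrm{Bl}_2\mathbb{P}^2$, while tracking the boundary. Your version is self-contained and produces an explicit isomorphism of pairs, which is exactly the sense of ``the same'' the paper intends. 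The paper's version is shorter but leans on the classification.

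You can drop the hedging about deformations and infinitely-near centres, because ampleness of $D''$ already forces all the genericity you need. If some centre were infinitely near, the strict transform $\tilde E$ of the earlier exceptional curve would be a $(-2)$-curve with $D''\cdot\tilde E=0$. If three centres in $\mathbb{P}^2$ were collinear, the line's strict transform would have $D''$-degree $\le 0$. If two centres in $\mathbb{P}^1\times\mathbb{P}^1$ lay on a common ruling, that ruling would have $D''$-degree $0$.

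Hence the centres are distinct points of $D'$, and the strict transform of $\overline{q_1q_2}$ is the $(-1)$-curve $H-E_1-E_2$ meeting $D''$ once. Likewise the rulings through $p$ avoid the remaining centres, which is what justifies your claim that those centres ``transport directly.'' So the explicit contraction argument proves the isomorphism with no case analysis.

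One side remark is inaccurate: smooth affine elliptic pairs with fixed $(D'')^2$ do not form a single deformation class when $(D'')^2=8$. There $\mathbb{F}_1$ and $\mathbb{P}^1\times\mathbb{P}^1$ give different classes, which the paper lists separately. This is harmless here, since $k\ge 2$ forces $(D'')^2\le 7$.
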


\begin{proof}
The first part follows immediately by the Nakai-Moishezon criterion using that each non-toric blow-up reduces self-intersection number by $1.$ Concerning the second part, for \(2\leq k\leq 8\), the classification of del Pezzo surfaces shows that the
same del Pezzo pairs admit presentations both as \(k\) boundary blowups of
\(\mathbb{P}^2\) and as \(k-1\) boundary blowups of
\(\mathbb{P}^1\times\mathbb{P}^1\). Thus the same log Calabi--Yau pairs \((Y,D'')\) admit both
presentations, and taking complements gives the same collection of
affine varieties.
\end{proof}

\subsection{Moment Polygons and Symington Triangles}

The main result of this section is a version of the Symington triangle packing technique used by \cite{EF21, LMN23, LMN25}. We use the notation from the previous section for the generalized toric model.

\begin{theorem}\label{thm-tri-pack}

Let $U$ be an affine log CY surface with a generalized toric model whose Toric Start $(X, D_X)$ is such that if a component of $D_X$ has negative self-intersection, then during the Nodal Smoothing step it is not deformed. Note that this condition holds for the generalized toric models produced by Theorem \ref{thm-toric-model}.

Then, there is a Kähler structure on $Y$, coming from a positive Hermitian metric on some integral ample divisor  $A = \sum_{i=1}^m \mu_i C''_i$ supported on the boundary $D''$, satisfying the following properties:

\begin{itemize}
    \item There exists a moment polytope $\Delta_X$ of $X$ with an interior point $p$ such that for every $i$, the affine distance from $p$ to each edge corresponding to a component $L\in I_i$ is equal to $\mu_i$ and the total affine length $l_i$ of these edges is strictly larger than $k_i\mu_i,$ where $k_i$ denotes the number of non-toric blow-ups.
    \item The difference $l_i-k_i\mu_i$ is equal to the symplectic area of $C_i''$. 
\end{itemize}

\end{theorem}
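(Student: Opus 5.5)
Choose the ample divisor first and let it dictate the polytope. Since $U$ is affine, Goodman's theorem (as in the proof of Theorem \ref{thm-toric-model}) gives an ample divisor supported on $D''$. I will take an integral ample $A=\sum_i \mu_i C''_i$ with $\mu_i>0$ for every $i$; positivity of all coefficients is possible because ampleness is open in $\mathrm{Pic}(Y)_{\mathbb R}$ and a small positive multiple of $D''$ can be added before clearing denominators. Pushing forward, $\pi_*A=\sum_i \mu_i C_i$ on $X$ is linearly equivalent to $\sum_i\mu_i\sum_{L\in I_i} L$, since each $C_i$ is a deformation of the chain $I_i$ inside $X$. So $\pi_*A\sim \sum_{L}\mu_{i(L)}L$ is a torus-invariant divisor on $X$. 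Here $\pi\colon Y\to X$ is the blowdown of the non-toric blowups.

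Next I build the moment polytope from this divisor. Its polytope is
\[
\Delta_X=\{m:\langle m,v_L\rangle\ge -\mu_{i(L)}\}.
\]
The support values are exactly the $\mu$'s, so $p=0$ lies at affine distance $\mu_i$ from each edge of $I_i$. Then I check that $\sum_L \mu_{i(L)}L$ is ample on $X$. The key point is $L\cdot \pi_*A=L\cdot \pi_*A$ computed either via the smoothing or toric-wise.
\begin{itemize}
\item For $L$ an undeformed component with strict transform $C''$, we have $\pi^*L=C''+(\text{exceptional curves})$. Hence $L\cdot\pi_*A=\pi^*L\cdot A\ge C''\cdot A>0$.
\item For components in smoothed chains, intersection with $\pi_*A$ still relates to the non-negative self-intersection. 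Here the hypothesis that negative curves are not deformed is used: every toric curve $L$ with $L^2<0$ survives as a boundary curve of $Y$ and is treated by the first case.
\item For $L$ with $L^2\ge 0$, positivity follows because $\pi_*A$ is big and nef-ish. More concretely, $\pi_*A\cdot L=\pi_*A\cdot L'$ for a moving deformation $L'$ meeting $U$-type curves positively.
\end{itemize}
With ampleness in hand, Kähler forms in $c_1(A)$ restrict to $\Delta_X$ via the standard Delzant/Guillemin construction.

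The edge-length inequality then comes from a single computation. The total affine length of the edges in $I_i$ is
\[
l_i=\sum_{L\in I_i} L\cdot\pi_*A=C_i\cdot\pi_*A .
\]
Since each non-toric blowup on $C_i$ has exceptional curve $E$ with $E\cdot C_i''=1$ and coefficient zero in $A$, we get $\pi^*\pi_*A=A+\sum_{E\text{ on }C_i}\mu_iE$, summing with infinitely-near multiplicities. Therefore
\[
C_i''\cdot A=C_i\cdot\pi_*A-k_i\mu_i=l_i-k_i\mu_i .
\]
This is positive by ampleness of $A$. It proves both bullets once the symplectic area of $C_i''$ is identified with $c_1(A)\cdot[C_i'']$ for the Kähler form from a positive Hermitian metric on $\mathcal O(A)$, and that identification is automatic.

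The main obstacle is compatibility of the Kähler form on $Y$ with a toric moment polytope on $X$. The form on $Y$ is in class $c_1(A)$, while $\Delta_X$ describes a form on $X$ in class $c_1(\pi_*A)$, and the boundary $D'$ is not toric after smoothing. Only cohomological data is asserted, so I would resolve this by reading $\Delta_X$ as combinatorial data determined by $\pi_*A$ and the lengths as intersection numbers. Ampleness of $\pi_*A$ is the one genuinely geometric step, and there I would lean on the non-deformation hypothesis for negative curves.
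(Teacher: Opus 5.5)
Your route is the paper's: Goodman's ample divisor $A=\sum_i\mu_iC_i''$, the toric divisor $H_X=\sum_i\mu_i\sum_{L\in I_i}L\sim\pi_*A$, its polytope with $p=0$, $l_i=H_X\cdot C_i$, and the identity $A\cdot C_i''=H_X\cdot C_i-k_i\mu_i$. Your pullback derivation of that identity is fine.

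The gap is in the step you yourself single out: ampleness of $H_X$, for a toric curve $L$ that lies in a smoothed chain. There you only say that $\pi_*A$ is ``big and nef-ish'' and appeal to a ``moving deformation $L'$ meeting $U$-type curves positively''. Neither is an argument. Bigness and nefness do not force $H_X\cdot L>0$ for a given curve, and the curve $L'$ is never specified or shown to miss the blow-up centres. The displayed ``key point'' $L\cdot\pi_*A=L\cdot\pi_*A$ is vacuous. As written, you have established $H_X\cdot L>0$ only for the undeformed components.

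The fix is already in your first bullet, which you applied too narrowly. For \emph{every} irreducible curve $L\subset X$, deformed or not, $\pi^*L$ is the strict transform of $L$ plus a nonnegative combination of exceptional curves. Hence $H_X\cdot L=\pi_*A\cdot L=A\cdot\pi^*L>0$ by ampleness of $A$ on $Y$. Together with the toric criterion (strict positivity on invariant curves), this gives ampleness of $H_X$ at once. It does not even use the hypothesis on negative curves.

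The paper argues differently, directly on $X$. For an unmerged $L=C_i$ it uses $H_X\cdot C_i=A\cdot C_i''+k_i\mu_i>0$. For $L$ in a chain of length $\ge 2$ it computes $H_X\cdot L=\mu_{j'}+\mu_{j''}+\mu_iL^2$ from the two neighbours of $L$ in $D_X$. The hypothesis $L^2\ge 0$ is exactly what makes this positive.

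With either repair, the rest of your proposal goes through as in the paper: the polytope of $H_X$ with $p=0$ at affine distance $\mu_i$, and $l_i-k_i\mu_i=A\cdot C_i''=\int_{C_i''}\omega_Y>0$.
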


\begin{proof}

Because $U = Y \setminus D''$ is an affine log CY surface, $D''$ supports an \emph{effective} ample $\mathbb{Z}$-divisor by Goodman's theorem \cite[Theorem~2]{Goodman1969}. Let this ample divisor be $A = \sum_{i=1}^m \mu_i C''_i$ with $\mu_i>0$. 

Choose a positive Hermitian metric on $\mathcal{O}_Y(A)$, and let
$\omega_Y$ be its curvature form, normalized so that
\[
[\omega_Y]=c_1(\mathcal{O}_Y(A)).
\]
Thus, for every curve $C\subset Y$,
\[
\int_C\omega_Y=A\cdot C.
\]
In particular, if $E$ is an exceptional curve meeting $C_i''$ once and no other boundary component, then
\[
\int_E\omega_Y=A\cdot E=\mu_i.
\]

Since smoothing preserves the divisor class, one has
\[
[C_i]=\sum_{L\in I_i}[L]\in\operatorname{Pic}(X).
\]
Consider the following $\mathbb{Z}$-divisor on the toric surface $X$:
\[
H_X = \sum_{i=1}^m \mu_i \left( \sum_{L \in I_i} L \right),
\] where $C_i$ is obtained by smoothing the chain $I_i$ on $X$ prior to the non-toric blowups. We claim that $H_X$ is an ample divisor. It suffices to show that $H_X \cdot L > 0$ for every irreducible toric boundary curve $L \subset D_X$.

We first evaluate $H_X\cdot C_i$. The $k_i$ non-toric blowups drop the self-intersection of the strict transforms such that $(C''_i)^2 = C_i^2 - k_i$, while intersections between distinct boundary components remain unchanged. Evaluating the intersection of $A$ with $C''_i$ yields the fundamental identity:
\[
A \cdot C''_i = \mu_i(C_i^2 - k_i) + \sum_{j \neq i} \mu_j (C_j \cdot C_i) = \left( \sum_{j=1}^m \mu_j C_j \right) \cdot C_i - k_i \mu_i = H_X \cdot C_i - k_i \mu_i.
\]

Because $A$ is ample on $Y$, we have $A \cdot C''_i > 0$. We obtain the inequality:
\[
H_X \cdot C_i - k_i \mu_i > 0 \implies H_X \cdot C_i > k_i \mu_i \ge 0.
\]

Let us now go back to our claim about $L \subset D_X$. There are two options:

\begin{enumerate}

\item If $L$ is not merged with any other toric boundary curve, then $L = C_i$ for some $i$. By our fundamental inequality, $H_X \cdot L = H_X \cdot C_i > k_i \mu_i \ge 0$.

\item Suppose $L$ belongs to a chain $I_i$ of length $\ge 2$, which we are given implies that $L^2 \ge 0$. It intersects exactly two other toric boundary components in $D_X$, say $L'$ and $L''$. Let $L' \in I_{j'}$ and $L'' \in I_{j''}$, where $j'$ and $j''$ may or may not equal $i$. The intersection evaluates to:
\[
H_X \cdot L = \mu_{j'} (L' \cdot L) + \mu_{j''} (L'' \cdot L) + \mu_i L^2 = \mu_{j'} + \mu_{j''} + \mu_i L^2\ge \mu_{j'} + \mu_{j''} > 0.
\]

\end{enumerate}

Therefore, $H_X$ is indeed ample. Now consider the corresponding moment polytope:
\[ \Delta_X = \{ u \in \mathbb{R}^2 \mid \langle u, \nu_L \rangle + \mu_{i(L)} \ge 0 \text{ for all toric boundary components $L$} \}, \] where $\nu_L$ is the primitive generator of the one-dimensional cone of the fan corresponding to $L$ and $L$ is smoothed to $C_{i(L)}.$

We choose $p$ to be the origin, which is of affine distance $\mu_i$ to every toric boundary curve $L \in I_i$ by construction. Let $l_i$ be the sum of the affine lengths of the edges corresponding to $I_i$. By construction, we have $l_i = H_X \cdot C_i$, which shows $l_i > k_i \mu_i$ by the fundamental inequality, as desired.

Finally, using the normalization of $\omega_Y$ and the fundamental identity, we obtain
\[
\int_{C_i''}\omega_Y
=A\cdot C_i''
=H_X\cdot C_i-k_i\mu_i
=l_i-k_i\mu_i.
\]

\end{proof}

\subsection{Constructing a rational ray diagram associated to $U$}

In this section, we will construct a rational ray diagram associated to $U$ using the technique described on page 145 of \cite{Eva23}. Let's first review some definitions, referring the reader to \cite{Eva23} for more details and explanations.

\begin{definition}[Symington Triangle]
In an almost toric base, a \emph{Symington triangle} on a boundary edge $E$ (which can be the entire boundary) is an affinely embedded triangle $T$, disjoint from the nodes, whose intersection with the boundary is precisely one of its edges $B$ (the base edge), which is contained in the interior of $E$, and the integral affine distance of the apex of $T$ to $E$ measured by one of the side edges of $T$ is equal to the integral affine length of $B.$ If the integral affine distance is equal to the integral length divided by some positive integer $d$, we refer to it as a \emph{$d$-big Symington triangle}.
\end{definition}
\begin{definition}[Nodal Trade]
In an almost toric base, a \emph{nodal trade} at a boundary vertex $v$ is a local modification that replaces a standard corner neighborhood of $v$ with a neighborhood containing a smooth boundary segment and exactly one interior node. This operation is performed by excising a standard neighborhood of the corner and introducing a branch cut that extends from the new interior node to the boundary, directed along the invariant eigendirection of the node's affine monodromy, which smooths the corner into a straight edge in the affine structure.
\end{definition}

Here is a summary of the strategy.

\begin{enumerate}
    \item Case 1: We are given $\Delta_X$ as in Theorem \ref{thm-tri-pack} where nothing is done in the Nodal Smoothing step as guaranteed by Theorem \ref{thm-toric-model}. For each $i$ with $k_i>0$, we can embed one $k_i$-big
Symington triangle with apex $p$ and base on the $i$-th boundary
edge $e_i$. Any two of these big triangles intersect only at $p$. Then, we can do nodal slides as described below to perturb these and make them completely disjoint (cf. \cite[Lemma 32]{LMN23} and \cite[Proposition 5.6]{EF21}) and then do further nodal slides to separate the big Symington triangles into disjoint Symington triangles. We then apply the non-toric blowup operation along these triangles and use branch moves as described on page 145 of \cite{Eva23} to get our rational ray diagram. Please see Figure \ref{fig:332} for a summary. 
    \item Case 2(1): Consider  the $\mathcal{O}(2,2)$ moment polytope of $\mathbb{P}^1\times \mathbb{P}^1$ and do all nodal trades at the vertices.
    \item Case 2(2): We are given $\mathbb{P}^2$ with $\mathcal{O}(3)$  moment polytope. After nodal trades and some Vianna-type branch moves, we can embed at most $8$ disjoint Symington triangles with their base on the long edge and tip vertices all contained in the ray emanating from the monotone point $p$, parallel to the long edge of the boundary (with respect to a straight path from $p$ to the edge that lies in the reference chart). Please see Figures \ref{fig-Vianna} and \ref{fig-fan}.
\end{enumerate}

Let us now give more details.
\subsubsection{Case 1}
\label{sec:case1}

In Case 1, we are given a Delzant moment polytope $\Delta_X$ as in Theorem \ref{thm-tri-pack}, and a point $p$. Let $e_1, \dots, e_m$ be the edges of $\Delta_X$ ordered counterclockwise, with primitive counterclockwise direction vectors $\vec{v}_1, \dots, \vec{v}_m$. We must embed $k_i \ge 0$ standard Symington triangles on each boundary edge $e_i$ (where $i=1,\dots,m$) such that their integral affine base lengths exactly match the affine distance from their apex to the edge, and such that all $\sum k_i$ triangles are completely disjoint. We recommend the reader to follow the steps in the example described by Figure \ref{fig:332}.

We first embed one $k_i$-big Symington triangle per edge with tip at $p$ and base affine length $k_i$ times the affine distance of $p$ to that edge, for every $i$ with $k_i\geq 1$. If $k_i=0$, for the sake of uniformity of notation in the argument, we take a straight line segment drawn from $p$ to $e_i$ and think of it as one of these triangles. We can make sure that these triangles (called big triangles) only intersect at $p$. We get rid of that intersection by performing nodal slides parallel to the edges by the Resolving Triangle Packing Lemma right below. Once this is done by further nodal slides we can also separate out the $k_i$ standard Symington triangles for each edge.

\begin{lemma}[Resolving Triangle Packing]
For a sufficiently small $\epsilon > 0$, we define the shifted big triangle for each edge by shifting the entire big triangle parallel to $\vec{v}_i$ by $\epsilon$ times the inverse of the affine distance of $p$ to $e_i$. The apex of the shifted big triangle becomes
\begin{equation}
    A_i := p + \frac{\epsilon}{d(p, e_i)} \vec{v}_i.
\end{equation} 

Here sufficiently small only means that, after the shifts, triangles are still contained in $\Delta_X$ and the base edges are contained in the interiors of the $e_i$. 

Then, the shifted big triangles are automatically disjoint.
\end{lemma}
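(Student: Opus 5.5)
The plan is to reduce disjointness to a separation statement along the rays from $p$ to the vertices of $\Delta_X$. I will put $p$ at the origin, as in the proof of Theorem \ref{thm-tri-pack}. Then $e_i\subset\{\langle u,\nu_i\rangle=-\mu_i\}$ and $d(p,e_i)=\mu_i$. Let $q_i=e_i\cap e_{i+1}$ be the vertex between consecutive edges, and set $f_i(x)=\det(q_i,x)$. This functional vanishes on the line through $p$ and $q_i$, and I orient it to be positive on the counterclockwise side of that ray.

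Unshifted, the big triangle $T_i$ (or the segment, when $k_i=0$) lies in the cone from $p$ over $e_i$. Its base lies in the interior of $e_i$. So $T_i$ meets the rays $\overline{pq_{i-1}}$ and $\overline{pq_i}$ only at $p$, which gives $T_i\subset\{f_i\le 0\}$, $T_{i+1}\subset\{f_i\ge 0\}$, and $T_i\cap\{f_i=0\}=\{p\}=T_{i+1}\cap\{f_i=0\}$.

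The key computation explains the normalization by $1/d(p,e_i)$. Since $\vec v_i$ is a primitive direction of $e_i$ and $q_i\in e_i$, the quantity $\det(q_i,\vec v_i)$ is twice the area of the triangle spanned by $p$ and a primitive segment of $e_i$. That is the affine distance of $p$ to $e_i$, so $\det(q_i,\vec v_i)=\mu_i$. The sign is positive because $p$ is interior and $\vec v_i$ points counterclockwise. Likewise $q_i\in e_{i+1}$ gives $\det(q_i,\vec v_{i+1})=\mu_{i+1}$. Hence both shifts $\epsilon\vec v_i/\mu_i$ and $\epsilon\vec v_{i+1}/\mu_{i+1}$ raise $f_i$ by exactly $\epsilon$.

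It follows that the shifted triangle $T_i'$ lies in $\{f_i\le\epsilon\}$ and $T_{i+1}'$ lies in $\{f_i\ge\epsilon\}$. Their only points on the line $\{f_i=\epsilon\}$ are the apexes $A_i$ and $A_{i+1}$. These are distinct, because $\vec v_i/\mu_i\neq\vec v_{i+1}/\mu_{i+1}$: the edges are not parallel. So consecutive shifted triangles are disjoint. Edges with $k_i=0$ are handled by the same argument, since a segment is a degenerate triangle.

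For non-consecutive $i,j$, I first use compactness. Outside a small ball $B_r(p)$ the sets $T_i$ and $T_j$ are at positive distance, so translations of size $O(\epsilon)$ keep them disjoint there. Inside $B_r(p)$, $T_i$ lies in $\{f_{i}\le 0\}\cap\{f_{i-1}\ge 0\}$, and $T_j$ lies in the sector between $q_{j-1}$ and $q_j$. These two sectors are separated by at least one full intermediate sector, namely the cone over $e_{i+1}$. I will then pick a functional $g$ vanishing on a ray strictly inside that intermediate sector, with $g\le -c|x|$ on $T_i$ and $g\ge c|x|$ on $T_j$ near $p$ (or the reverse).

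This is the step I expect to be the main obstacle, because a single line through $p$ separates two cones only when their union fits in a half-plane. If it does not, I will instead use the two vertex functionals $f_i$ and $f_{j-1}$ together with the fact that the shifts are $O(\epsilon)$. Near $p$, $T_i'$ and $T_j'$ stay within $O(\epsilon)$ of cones meeting only at $p$ with positive angular gap, and the apexes stay on opposite sides of the intermediate sector. For $\epsilon$ small relative to $r$ this rules out any intersection.

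Finally, "sufficiently small" is used only as stated: $T_i'\subset\Delta_X$, and each base stays in the interior of $e_i$. This holds for small $\epsilon$ because each base was already in the interior of its edge and the shift is parallel to that edge.
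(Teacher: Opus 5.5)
Your consecutive-pair argument is correct. It is the paper's key computation in different coordinates: the identities $\det(q_i,\vec v_i)=\mu_i=\det(q_i,\vec v_{i+1})$ say that $A_i$ and $A_{i+1}$ both lie on $\{f_i=\epsilon\}$. Equivalently, $A_{i+1}-A_i$ is parallel to $\overline{pq_i}$, which the paper obtains by normalizing the corner. When $\Delta_X$ is a triangle this already finishes the proof.

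The gap is the non-consecutive case, and it is not a technicality. Near $p$ the problem is scale-invariant. There the shifted triangles agree with the translated cones $A_i+C_i$ and $A_j+C_j$, where $C_i$ is the cone from $p$ over $e_i$, and $|A_i-p|=O(\epsilon)$. So any overlap would have diameter $O(\epsilon)$ and lie inside $B_r(p)$, and taking $\epsilon$ small relative to $r$ cannot exclude it. Cones meeting only at their apex do overlap after $O(\epsilon)$ translations: the closed first and third quadrants, translated by $(-\epsilon,-\epsilon)$ and $(\epsilon,\epsilon)$, meet in $[-\epsilon,\epsilon]^2$. Whether this happens depends on the exact shift directions.

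Your phrase that the apexes ``stay on opposite sides'' is exactly what needs proof. A linear $g$ with $g\le 0$ on $C_i$, $g\ge 0$ on $C_j$ and $g(A_i)<g(A_j)$ would settle the case for all $\epsilon$ at once, but nothing in the proposal produces one. Your stated obstacle is also misplaced: two cones of angle $<\pi$ meeting only at $p$ can always be separated by a line through $p$. The real issue is where the shifted apexes sit. Finally, the compactness step imposes an extra, unquantified smallness on $\epsilon$. That contradicts the lemma, and your own last paragraph, which say smallness is used only for containment.

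The missing idea is global, and your functionals nearly give it. Since $f_{i-1}(A_i)=f_i(A_i)=\epsilon$, you have $T_i'\subset W_i:=\{f_{i-1}\ge\epsilon\}\cap\{f_i\le\epsilon\}=A_i+C_i$, with $T_i'\cap\partial W_i=\{A_i\}$. The segment $A_iA_{i+1}\subset\{f_i=\epsilon\}$ points in the direction of $p-q_i$. These directions turn counterclockwise by less than $\pi$ at each step and by $2\pi$ in total. So $A_1,\dots,A_m$ are the vertices of a strictly convex polygon $P_{\text{micro}}=\bigcap_k\{f_k\le\epsilon\}$. Then $W_i$ is the convex wedge at $A_i$ bounded by the ray from $A_i$ through $A_{i-1}$ and the extension of $A_{i+1}A_i$ beyond $A_i$. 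These exterior wedges tile the complement of $P_{\text{micro}}$ in pinwheel fashion with pairwise disjoint interiors. Moreover $A_i\in P_{\text{micro}}$ misses every $\operatorname{int}W_j$. Hence all shifted triangles are pairwise disjoint, for every $\epsilon>0$. This is the paper's argument; it treats all pairs uniformly and needs no compactness.
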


\begin{proof}
We first notice that the vector connecting $A_{i-1}$ to $A_i$ is a positive scalar multiple of the vector pointing from $R_i$ to $p$. One way to see this is to apply an integral affine transformation that maps the corner $R_i$ to the origin $(0,0)$. We can locally align $e_i$ with the positive $x$-axis and $e_{i-1}$ with the positive $y$-axis (meaning $\vec{v}_i = (1,0)$ and $\vec{v}_{i-1} = (0,-1)$).

Let $p = (x,y)$. By definition, $d(p, e_{i-1}) = x$ and $d(p, e_i) = y$. Evaluating the shifts yields:
\begin{align*}
    A_{i-1} &= p + \frac{\epsilon}{x} (0,-1) = \left(x, y - \frac{\epsilon}{x}\right) \\
    A_i &= p + \frac{\epsilon}{y} (1,0) = \left(x + \frac{\epsilon}{y}, y\right)
\end{align*}
The vector connecting $A_{i-1}$ to $A_i$ is calculated as:
\begin{equation}
    A_i - A_{i-1} = \left(\frac{\epsilon}{y}, \frac{\epsilon}{x}\right) = \frac{\epsilon}{xy} (x,y) = \frac{\epsilon}{d(p, e_{i-1}) d(p, e_i)} (p - R_i),
\end{equation} proving the desired claim.

Because $\Delta_X$ is strictly convex, the vectors $p - R_i$ rotate counterclockwise by less than $\pi$ radians as we traverse the corners. Consequently, the edges of the sequence $A_1, \dots, A_m$ also rotate counterclockwise by less than $\pi$ radians, proving that they form a \emph{strictly convex} polygon $P_{\text{micro}}$.

The extended edges of any strictly convex polygon partition the complement into a set of ``exterior wedges.'' Here by exterior wedge we specifically take those wedges bounded by the rays obtained by extending each edge $A_iA_{i-1}$ in the direction of $A_{i-1}$. We now notice that the  $i$-th shifted triangle is completely confined within the $i$-th exterior wedge of $P_{\text{micro}}$ and, except for the vertex $A_i$, is contained in the interior of that exterior wedge. Because the unions of the interiors with their vertices (of each wedge) are pairwise disjoint, the proof is complete.
\end{proof}

\begin{figure}[htbp]
\centering
\begin{tikzpicture}[scale=0.48, transform shape, >=stealth]
    \begin{scope}[xshift=0cm, yshift=0cm]
        \node[scale=1.4, font=\bfseries] at (4.5, 10.5) {A) Initial Packing of Triangles};
        \draw[thick, fill=gray!10] (0,0) -- (9,0) -- (0,9) -- cycle;
        
        \coordinate (p) at (2.5, 2.5);
        \fill (p) circle (2.5pt) node[above left, font=\Large] {$p$};

        \filldraw[fill=blue!20, draw=blue, thick, opacity=0.7] (p) -- (0.75,0) -- (3.25,0) -- cycle;
        \filldraw[fill=blue!20, draw=blue, thick, opacity=0.7] (p) -- (3.25,0) -- (5.75,0) -- cycle;
        \filldraw[fill=blue!20, draw=blue, thick, opacity=0.7] (p) -- (5.75,0) -- (8.25,0) -- cycle;

        \filldraw[fill=green!20, draw=green!60!black, thick, opacity=0.7] (p) -- (0,8.25) -- (0,5.75) -- cycle;
        \filldraw[fill=green!20, draw=green!60!black, thick, opacity=0.7] (p) -- (0,5.75) -- (0,3.25) -- cycle;
        \filldraw[fill=green!20, draw=green!60!black, thick, opacity=0.7] (p) -- (0,3.25) -- (0,0.75) -- cycle;

        \filldraw[fill=red!20, draw=red, thick, opacity=0.7] (p) -- (8.5,0.5) -- (4.5,4.5) -- cycle;
        \filldraw[fill=red!20, draw=red, thick, opacity=0.7] (p) -- (4.5,4.5) -- (0.5,8.5) -- cycle;
    \end{scope}

    \draw[->, ultra thick] (9.5, 4.5) -- (11.0, 4.5);

    \begin{scope}[xshift=11.5cm, yshift=0cm]
        \node[scale=1.4, font=\bfseries] at (4.5, 10.5) {B) Resolved Packing};
        \draw[thick, fill=gray!10] (0,0) -- (9,0) -- (0,9) -- cycle;

        \coordinate (A1) at (2.7, 2.5); 
        \coordinate (B11L) at (0.95,0); \coordinate (B11R) at (3.45,0);
        \coordinate (B12L) at (3.45,0); \coordinate (B12R) at (5.95,0);
        \coordinate (B13L) at (5.95,0); \coordinate (B13R) at (8.45,0);

        \filldraw[fill=blue!20, draw=blue, thick, opacity=0.8] (A1) -- (B11L) -- (B11R) -- cycle;
        \filldraw[fill=blue!20, draw=blue, thick, opacity=0.8] (A1) -- (B12L) -- (B12R) -- cycle;
        \filldraw[fill=blue!20, draw=blue, thick, opacity=0.8] (A1) -- (B13L) -- (B13R) -- cycle;

        \coordinate (A3) at (2.5, 2.3); 
        \coordinate (B31T) at (0,8.05); \coordinate (B31B) at (0,5.55);
        \coordinate (B32T) at (0,5.55); \coordinate (B32B) at (0,3.05);
        \coordinate (B33T) at (0,3.05); \coordinate (B33B) at (0,0.55);

        \filldraw[fill=green!20, draw=green!60!black, thick, opacity=0.8] (A3) -- (B31T) -- (B31B) -- cycle;
        \filldraw[fill=green!20, draw=green!60!black, thick, opacity=0.8] (A3) -- (B32T) -- (B32B) -- cycle;
        \filldraw[fill=green!20, draw=green!60!black, thick, opacity=0.8] (A3) -- (B33T) -- (B33B) -- cycle;

        \coordinate (A2) at (2.375, 2.625); 
        \coordinate (B21R) at (8.375,0.625); \coordinate (B21L) at (4.375,4.625);
        \coordinate (B22R) at (4.375,4.625); \coordinate (B22L) at (0.375,8.625);

        \filldraw[fill=red!20, draw=red, thick, opacity=0.8] (A2) -- (B21R) -- (B21L) -- cycle;
        \filldraw[fill=red!20, draw=red, thick, opacity=0.8] (A2) -- (B22R) -- (B22L) -- cycle;

        \fill (A1) circle (2.5pt) node[right, font=\Large] {$A_1$};
        \fill (A2) circle (2.5pt) node[above, font=\Large] {$A_2$};
        \fill (A3) circle (2.5pt) node[left, font=\Large] {$A_3$};
        
        \draw[thick, dashed, darkgray] (2.5, 2.5) circle (1.2cm);
        \node[font=\Large, darkgray] at (1.5, 1.5) {};
        
        \coordinate (ZoomRight) at (3.75, 2.5);
    \end{scope}

    \begin{scope}[xshift=22.5cm, yshift=4.5cm]

        \coordinate (ZoomPanelLeft) at (-2.0, -0.5);

        \begin{scope}[scale=7, shift={(-2.5,-2.5)}]
            \coordinate (p) at (2.5, 2.5);
            \coordinate (A1) at (2.7, 2.5); 
            \coordinate (A2) at (2.375, 2.625);
            \coordinate (A3) at (2.5, 2.3);

            \draw[black, ultra thick, dashed] (A1) -- ++(0.325, -0.125); 
            \draw[black, ultra thick, dashed] (A2) -- ++(-0.125, 0.325); 
            \draw[black, ultra thick, dashed] (A3) -- ++(-0.2, -0.2);    

            \draw[orange, thick, fill=orange!20, opacity=0.8] (A1) -- (A2) -- (A3) -- cycle;

            \coordinate (R10) at (2.525, 2.25);
            \coordinate (R11) at (2.775, 2.25);
            \coordinate (R12) at (3.025, 2.25);
            \coordinate (R13) at (3.275, 2.25);
            \fill[blue!20, opacity=0.8] (A1) -- (R10) -- (R13) -- cycle;
            \draw[blue, thick] (A1) -- (R10);
            \draw[blue, thick] (A1) -- (R11);
            \draw[blue, thick] (A1) -- (R12);
            \draw[blue, thick] (A1) -- (R13);
            
            \draw[blue, thick, dashed] (R10) -- ++(-0.175, -0.25);
            \draw[blue, thick, dashed] (R11) -- ++(0.075, -0.25);
            \draw[blue, thick, dashed] (R12) -- ++(0.325, -0.25);
            \draw[blue, thick, dashed] (R13) -- ++(0.575, -0.25);

            \coordinate (R30) at (2.25, 2.875);
            \coordinate (R31) at (2.25, 2.625);
            \coordinate (R32) at (2.25, 2.375);
            \coordinate (R33) at (2.25, 2.125);
            \fill[green!20, opacity=0.8] (A3) -- (R30) -- (R33) -- cycle;
            \draw[green!60!black, thick] (A3) -- (R30);
            \draw[green!60!black, thick] (A3) -- (R31);
            \draw[green!60!black, thick] (A3) -- (R32);
            \draw[green!60!black, thick] (A3) -- (R33);
            
            \draw[green!60!black, thick, dashed] (R30) -- ++(-0.125, 0.2875);
            \draw[green!60!black, thick, dashed] (R31) -- ++(-0.125, 0.1625);
            \draw[green!60!black, thick, dashed] (R32) -- ++(-0.125, 0.0375);
            \draw[green!60!black, thick, dashed] (R33) -- ++(-0.125, -0.0875);

            \coordinate (R20) at (2.975, 2.425);
            \coordinate (R21) at (2.575, 2.825);
            \coordinate (R22) at (2.175, 3.225);
            \fill[red!20, opacity=0.8] (A2) -- (R20) -- (R22) -- cycle;
            \draw[red, thick] (A2) -- (R20);
            \draw[red, thick] (A2) -- (R21);
            \draw[red, thick] (A2) -- (R22);
            
            \draw[red, thick, dashed] (R20) -- ++(0.3, -0.1);
            \draw[red, thick, dashed] (R21) -- ++(0.1, 0.1);
            \draw[red, thick, dashed] (R22) -- ++(-0.1, 0.3);

            \fill (p) circle (0.2pt) node[right, font=\small, yshift=-3] {};
            \fill (A1) circle (0.3pt) node[right, font=\small] {};
            \fill (A2) circle (0.3pt) node[above left, font=\small] {};
            \fill (A3) circle (0.3pt) node[below left, font=\small] {};
        \end{scope}
    \end{scope}

    \draw[->, ultra thick, darkgray, dashed] (ZoomPanelLeft) to[out=180, in=0] (ZoomRight);

    \draw[->, ultra thick] (16.0, -0.5) to[out=270, in=90] (4.5, -3.0);

    \begin{scope}[xshift=0cm, yshift=-15cm]
        \node[scale=1.4, font=\bfseries] at (4.5, 10.5) {C) More Nodal Slides (Fully Disjoint)};
        \draw[thick, fill=gray!10] (0,0) -- (9,0) -- (0,9) -- cycle;

        \filldraw[fill=blue!20, draw=blue, thick, opacity=0.8] (2.7, 2.5) -- (0.95,0) -- (3.45,0) -- cycle;
        \filldraw[fill=blue!20, draw=blue, thick, opacity=0.8] (2.8, 2.5) -- (3.55,0) -- (6.05,0) -- cycle;
        \filldraw[fill=blue!20, draw=blue, thick, opacity=0.8] (2.9, 2.5) -- (6.15,0) -- (8.65,0) -- cycle;

        \filldraw[fill=green!20, draw=green!60!black, thick, opacity=0.8] (2.5, 2.3) -- (0,8.05) -- (0,5.55) -- cycle;
        \filldraw[fill=green!20, draw=green!60!black, thick, opacity=0.8] (2.5, 2.2) -- (0,5.45) -- (0,2.95) -- cycle;
        \filldraw[fill=green!20, draw=green!60!black, thick, opacity=0.8] (2.5, 2.1) -- (0,2.85) -- (0,0.35) -- cycle;

        \filldraw[fill=red!20, draw=red, thick, opacity=0.8] (2.375, 2.625) -- (8.375,0.625) -- (4.375,4.625) -- cycle;
        \filldraw[fill=red!20, draw=red, thick, opacity=0.8] (2.275, 2.725) -- (4.275,4.725) -- (0.275,8.725) -- cycle;
    \end{scope}

    \draw[->, ultra thick] (9.5, -10.5) -- (11.0, -10.5);

    \begin{scope}[xshift=11.5cm, yshift=-15cm]
        \node[scale=1.4, font=\bfseries] at (4.5, 10.5) {D) Exact Eigenray Diagram};
        
        \begin{scope}[shift={(4.5, 4.5)}, scale=1.5]
            \filldraw[black] (0,0) circle (2.5pt);
            
            \begin{scope}[rotate=0]
                \draw[thin, gray] (0,0) -- (1.5,0);
                \node at (1.5, 0) {\Large $\times$};
                \node at (2.5, 0) {\Large $\times$};
                \node at (3.5, 0) {\Large $\times$};
                \draw[thick, dashed] (1.5, 0) -- (5.0, 0);
            \end{scope}

            \begin{scope}[rotate=-90]
                \draw[thin, gray] (0,0) -- (1.5,0);
                \node at (1.5, 0) {\Large $\times$};
                \node at (2.5, 0) {\Large $\times$};
                \node at (3.5, 0) {\Large $\times$};
                \draw[thick, dashed] (1.5, 0) -- (5.0, 0);
            \end{scope}

            \begin{scope}[rotate=135]
                \draw[thin, gray] (0,0) -- (1.5,0);
                \node at (1.5, 0) {\Large $\times$};
                \node at (2.5, 0) {\Large $\times$};
                \draw[thick, dashed] (1.5, 0) -- (4.0, 0);
            \end{scope}
        \end{scope}
    \end{scope}

\end{tikzpicture}
\caption{Consider $U$ obtained from $\mathbb{CP}^2$ by $(3,3,2)$ non-toric blow-ups. We produce an associated rational ray diagram step-by-step. We consider the triangle with vertices at $(0,0),(18,0),(0,18)$ and take $p=(5, 5).$ A) An initial placement of the big triangles intersecting at $p$. B) We apply the Resolving Triangle Packing Lemma. Notice in the zoomed figure here that the line segments from $p$ to $A_i$ are parallel to the edges of the moment triangle. C) Secondary intra-edge nodal slides separate the individual Symington triangles, rendering them pairwise disjoint. D) We apply the branch moves to C and ignore the boundary.}
\label{fig:332}
\end{figure}

In the next step, we use nodal slides in the direction of $\vec{v}_i$ to separate each big triangle into $k_i$ standard Symington triangles (and forget about the line segment if $k_i=0$). Since the big triangles are already disjoint we can easily make sure that all $\sum k_i$ Symington triangles are disjoint.

The final step is to regard these triangles as prescriptions for a non-toric blow-up surgery and do branch moves that make the branch cuts of the chart along the eigenray in the direction of $\vec{v}_i$ for each $i$. In other words, we apply the procedure described in \cite[Example 9.3]{Eva23} at all edges in some order. The result is a compact almost toric base and taking the naive completion of the interior gives an exact eigenray diagram. Leaving out the locations of the nodes, we obtain a rational ray diagram.

We call a rational ray diagram $\mathcal{R}$ obtained from $U$ through this procedure an associated rational ray diagram. Note that the visible two-dimensional fan in this rational ray diagram is isomorphic to the fan of the Toric Start in the procedure. The isomorphism is given by the cross product pairing. The number of nodes on each ray is nothing but the number of non-toric blowups in the corresponding toric divisor.

\FloatBarrier
\subsubsection{Case 2.}

In Case 2(1), we can do nodal trades at each vertex of a square. This leads to the eigenray diagram with rays along the primitive vectors $(1,1),(1,-1),(-1,1), (-1,-1)$ and one node on each of them.

In Case 2(2), the specific Vianna triangle utilized corresponds to the Markov triple $(29, 5, 2)$. This triangle is obtained via a sequence of branch moves applied to the standard Delzant $\mathbb{CP}^2$ moment polygon, characterized by the initial Markov triple $(1, 1, 1)$. We apply three consecutive Markov mutations:
\begin{enumerate}
    \item $(1, 1, 1) \to (2, 1, 1)$: Mutate the weight $1 \to 3(1)(1) - 1 = 2$.
    \item $(2, 1, 1) \to (5, 2, 1)$: Mutate the weight $1 \to 3(2)(1) - 1 = 5$.
    \item $(5, 2, 1) \to (29, 5, 2)$: Mutate the weight $1 \to 3(5)(2) - 1 = 29$.
\end{enumerate}

To embed exactly 8 standard Symington triangles disjointly, the affine length of the target boundary edge must be strictly greater than 8 times the affine distance of the monotone point to the edges (which we normalize to be $1$). For a Vianna triangle, the affine length $L_a$ of the edge corresponding to the maximum weight $a^2$ is exactly $L_a = \frac{3a}{bc}$. Evaluating this length along the mutation sequence yields:
\begin{itemize}
    \item $(1, 1, 1) \implies L = 3 \le 8$
    \item $(2, 1, 1) \implies L = 6 \le 8$
    \item $(5, 2, 1) \implies L = 7.5 \le 8$
    \item $(29, 5, 2) \implies L = 8.7 > 8$
\end{itemize}
The $(29, 5, 2)$ configuration is the first in the sequence satisfying the condition $L_a > 8$.

We choose an explicit model of this integral affine triangle, where the long edge lies on the horizontal axis, the primitive integral direction vectors $U_i$ of the edges (oriented counterclockwise) for the $(29, 5, 2)$ Vianna triangle are:
\begin{itemize}
    \item $U_1 = (1, 0)$ (Long edge, weight $a^2 = 841$)
    \item $U_2 = (-1, 4)$ (Middle edge, weight $b^2 = 25$)
    \item $U_3 = (-204, -25)$ (Short edge, weight $c^2 = 4$)
\end{itemize}Thus the corresponding oriented edge vectors are
$a^2U_1$, $b^2U_2$, and $c^2U_3$, respectively.
These satisfy the required wedge product area balances $U_1 \wedge U_2 = c^2$, $U_2 \wedge U_3 = a^2$, and $U_3 \wedge U_1 = b^2$. We scale the triangle so that the monotone point $p$ has $y$-coordinate $1$.

We compute the primitive vectors along the eigenrays to the three vertices from $p$:
\begin{itemize}
    \item $v_{\text{top}} = (7, 1)$
    \item $v_{\text{left}} = (-41, -5)$
    \item $v_{\text{right}} = (1, -2)$
\end{itemize}

The $k = 9-d$ Symington nodes ($d$ is for degree here) are embedded such that their eigenlines are parallel to the long edge and pass through the monotone point (see Figure \ref{fig-Vianna}). To construct the exact eigenray diagram, $v_{\text{right}}$ must be mutated across these $k$ parallel cuts. Applying the unipotent affine shear transformation in the lower half-plane yields:
\begin{equation}
    v_{\text{right}}' = \begin{pmatrix} 1 & k \\ 0 & 1 \end{pmatrix} \begin{pmatrix} 1 \\ -2 \end{pmatrix} = (1 - 2k, -2)
\end{equation}
Substituting $k = 9 - d$:
\begin{equation}
    v_{\text{right}}' = (1 - 2(9 - d), -2) = (2d - 17, -2)
\end{equation}

As a result the associated exact eigenray diagram has rays along  $(1, 0)$, $(7, 1)$, $(-41, -5)$, and $(2d - 17, -2)$ with $9-d,1,1$ and $1$ nodes respectively. As in Case 1, we call each such rational ray diagram an associated rational ray diagram.


\begin{figure}[p!]
    \centering


    \begin{tikzpicture}[
        scale=0.73,
        xscale=1.6,
        yscale=2.2
    ]

        \coordinate (V1) at (0,0);
        \coordinate (V2) at (8.7,0);
        \coordinate (V3) at (1.5,3.5);

        \fill[gray!10] (V1) -- (V2) -- (V3) -- cycle;

        \draw[very thick]
            (V1) -- (V2)
            node[midway,below=4pt]
            {Long Edge (weight $a^2$, length $8.7$)};

        \draw[thick]
            (V1) -- (V3)
            node[midway,above left=0pt]
            {Short Edge ($c^2$)};

        \draw[thick]
            (V2) -- (V3)
            node[midway,above right=0pt]
            {Middle Edge ($b^2$)};

        \coordinate (P) at (barycentric cs:V1=1,V2=1,V3=1);

        \fill (P) circle (1.5pt)
            node[above=4pt] {Monotone point $p$};

        \draw[dashed,thick,->]
            (P) -- (6.2,1.166)
            node[above right] {Ray parallel to long edge};

        \foreach \i/\xb/\xt/\xtip in {
            1/0.10/1.05/3.45,
            2/1.15/2.10/3.80,
            3/2.20/3.15/4.15,
            4/3.25/4.20/4.50,
            5/4.30/5.25/4.85,
            6/5.35/6.30/5.20,
            7/6.40/7.35/5.55,
            8/7.45/8.40/5.90
        } {
            \draw[
                thick,
                fill=blue!10,
                draw=blue!80!black
            ]
                (\xb,0) --
                (\xt,0) --
                (\xtip,1.166) --
                cycle;

            \draw[thick,blue!80!black]
                (\xtip-0.05,1.166-0.05) --
                (\xtip+0.05,1.166+0.05);

            \draw[thick,blue!80!black]
                (\xtip-0.05,1.166+0.05) --
                (\xtip+0.05,1.166-0.05);

            \pgfmathsetmacro{\xmid}{(\xb+\xt)/2}

            \node[above=1pt,font=\small]
                at (\xmid,0) {$T_{\i}$};
        }

        \draw[
            decorate,
            decoration={brace,amplitude=4pt,mirror},
            yshift=-2pt
        ]
            (0.10,0) -- (1.05,0)
            node[black,midway,below=4pt] {length $1$};

        \coordinate (N1) at ($(V1)!0.06!(P)$);

        \draw[thick,red!80!black]
            (N1) ++(-0.06,-0.06) -- ++(0.12,0.12);

        \draw[thick,red!80!black]
            (N1) ++(-0.06,0.06) -- ++(0.12,-0.12);

        \draw[dashed,thick,red!80!black]
            (N1) -- (V1);

        \node[left=2pt,font=\small,red!80!black]
            at (N1) {Nodal Trade};

        \coordinate (N2) at ($(V2)!0.04!(P)$);

        \draw[thick,red!80!black]
            (N2) ++(-0.06,-0.06) -- ++(0.12,0.12);

        \draw[thick,red!80!black]
            (N2) ++(-0.06,0.06) -- ++(0.12,-0.12);

        \draw[dashed,thick,red!80!black]
            (N2) -- (V2);

        \node[below left=0pt,font=\small,red!80!black]
            at (N2) {Nodal Trade};

        \coordinate (N3) at ($(V3)!0.07!(P)$);

        \draw[thick,red!80!black]
            (N3) ++(-0.06,-0.06) -- ++(0.12,0.12);

        \draw[thick,red!80!black]
            (N3) ++(-0.06,0.06) -- ++(0.12,-0.12);

        \draw[dashed,thick,red!80!black]
            (N3) -- (V3);

        \node[below right=0pt,font=\small,red!80!black]
            at (N3) {Nodal Trade};

    \end{tikzpicture}

    \caption{Schematic of the $(29,5,2)$ Vianna triangle demonstrating
    the geometric capacity to embed eight standard Symington triangles
    disjointly along the long horizontal edge. All lengths are affine,
    and we normalize the affine distance from $p$ to each edge to be
    $1$. Below, we choose a model of the Vianna triangle with
    counterclockwise-oriented edge vectors
    $841(1,0)$, $25(-1,4)$, and $4(-204,-25)$, which is metrically
    different from this schematic picture.}
    \label{fig-Vianna}

    \vspace{2pt}


    \begin{tikzpicture}[
        scale=0.80,
        xscale=0.25,
        yscale=1.2
    ]

        \draw[
            xstep=1,
            ystep=1,
            gray!40,
            very thin
        ]
            (-45,-6) grid (15,3);

        \draw[thick,->]
            (-45,0) -- (18,0)
            node[right] {$x$};

        \draw[thick,->]
            (0,-6) -- (0,3.5)
            node[above] {$y$};

        \fill (0,0) circle (3pt);

\foreach \d in {1,2,3,4,5,6,7,8} {
    \pgfmathtruncatemacro{\xcut}{9-\d}

    \draw[
        very thick,
        orange!85!black,
        ->,
        opacity=0.75
    ]
        (0,0) -- (\xcut,0);

}

\fill[orange!85!black]
    (0,0) circle (2pt);

\node[
    orange!85!black,
    below right,
    font=\scriptsize
]
    at (0,0)
    {};

\node[
    orange!85!black,
    above right
]
    at (7,-0.65)
    {$v_{\mathrm{cut}}(d)=(9-d,0)$};

        \draw[very thick,->]
            (0,0) -- (7,1)
            node[above left]
            {$v_{\mathrm{top}}=(7,1)$};

        \draw[very thick,->]
            (0,0) -- (-41,-5)
            node[below left]
            {$v_{\mathrm{left}}=(-41,-5)$};

        \foreach \d in {1,2,3,4,5,6,7,8,9} {
            \pgfmathtruncatemacro{\xval}{2*\d-17}

            \draw[very thick,blue,->]
                (0,0) -- (\xval,-2);

            \node[
                below,
                blue,
                font=\small,
                rotate=-75,
                anchor=west
            ]
                at (\xval,-2.1)
                {$d=\d:(\xval,-2)$};
        }

        \node[blue,right]
            at (1,-5.5)
            {$v_{\mathrm{right}}'(d)=(2d-17,-2)$};

    \end{tikzpicture}

    \caption{Primitive vectors multiplied by their multiplicities in
    the rational ray diagrams parametrized by $d$, constructed with
    respect to the monotone point of the $(29,5,2)$ Vianna triangle.
    The ray $(9-d,0)$ represents the primitive ray $(1,0)$ with
    multiplicity $9-d$. When $d=9$ this ray disappears.}
    \label{fig-fan}

\end{figure}

\FloatBarrier

\subsection{Rational ray diagram models the Liouville manifold $\widehat{U}$}

We now prove the result that, indeed, the rational ray diagram captures the A-side geometric data of $U$ fully.

We recall the following terminology from \cite{LMN25,LMN23} in order to apply the symplectic Torelli theorem \cite[Theorem 13]{LMN23}.

\begin{definition}
Let $(X,\omega)$ be a closed symplectic four-manifold. A
\emph{symplectic log Calabi--Yau divisor} in $(X,\omega)$ is a
connected union
\[
D=\bigcup_{i=1}^m D_i
\]
of embedded symplectic surfaces such that:
\begin{enumerate}
    \item all intersections between distinct components are positive and
    transverse;
    \item no three distinct components have a common intersection point; and
    \item
    \[
    \sum_{i=1}^m [D_i]
    =
    \operatorname{PD}\bigl(c_1(X,\omega)\bigr)
    \in H_2(X;\mathbb{Z}).
    \]
\end{enumerate}
Here, $c_1(X,\omega)$ denotes the first Chern class determined by any
$\omega$-tame almost complex structure. The triple $(X,\omega,D)$ is
called a \emph{symplectic log Calabi--Yau pair}.

The pair is called \emph{orthogonal} if, for every
$p\in D_i\cap D_j$ with $i\neq j$, the tangent spaces $T_pD_i$ and
$T_pD_j$ are $\omega$-orthogonal; equivalently,
\[
\omega(v_i,v_j)=0
\]
for all $v_i\in T_pD_i$ and $v_j\in T_pD_j$.
\end{definition}

\begin{definition}
Let $(M,\omega,D)$ and $(M',\omega',D')$ be symplectic log
Calabi--Yau pairs. An \emph{isomorphism of symplectic log
Calabi--Yau pairs} is a diffeomorphism
\[
\Phi\colon M\xrightarrow{\sim}M'
\]
such that
\[
\Phi^*\omega'=\omega
\qquad\text{and}\qquad
\Phi(D)=D'
\]
as stratified symplectic divisors. Thus $\Phi$ induces a bijection
between the irreducible components of $D$ and those of $D'$, and
identifies the local branches and intersection strata at every node.
If the components are labelled, we additionally require $\Phi$ to
preserve their labels.
\end{definition}

\begin{lemma}\label{lem-preferred-grading}
Let $(M,\omega,D)$ be a symplectic log Calabi--Yau pair and suppose
that
\[
H^1(M;\mathbb Z)=0.
\]
Then $M\setminus D$ carries a preferred homotopy class of
trivializations of its canonical bundle, and hence a preferred grading
structure. Moreover, every isomorphism of such symplectic log
Calabi--Yau pairs preserves these preferred grading structures.
\end{lemma}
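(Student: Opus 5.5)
The plan is to construct the trivialization directly from the relation $\sum_i[D_i]=\operatorname{PD}(c_1(X,\omega))$. Choose an $\omega$-tame almost complex structure $J$ on $M$. Then $K_M^{-1}=\Lambda^2_{\mathbb C}(TM,J)$ is a complex line bundle with $c_1(K_M^{-1})=c_1(M,\omega)$.

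The divisor $D$ also determines a line bundle $\mathcal O(D)$: glue the trivial bundle over $M\setminus D$ to local models near each $D_i$, using products of local defining functions at the nodes. This bundle has $c_1(\mathcal O(D))=\operatorname{PD}(\sum_i[D_i])$, and it comes with a canonical section $s_D$ which is nonvanishing exactly on $M\setminus D$.

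By condition (3), the bundles $K_M^{-1}$ and $\mathcal O(D)$ have the same first Chern class, so they are isomorphic. Fix an isomorphism $\psi$. Then $\psi^{-1}(s_D)|_{M\setminus D}$ is a nowhere vanishing section of $K^{-1}_{M\setminus D}$, which is a trivialization of the canonical bundle there. By construction it has a simple pole along each $D_i$, so it mimics a log volume form. Since the choices of $J$ and of local models form a contractible space, this class is well defined once $\psi$ is fixed.

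The key step is independence of $\psi$. Two isomorphisms of complex line bundles differ by a map $g\colon M\to\mathbb C^*$, and their homotopy classes are classified by $[M,S^1]=H^1(M;\mathbb Z)$. This group vanishes by hypothesis, so $g$ is homotopic to a constant and $\psi$ is unique up to homotopy. The induced trivialization of $K_{M\setminus D}$ is therefore canonical up to homotopy. Grading structures on $M\setminus D$ are homotopy classes of trivializations of $(K^{-1})^{\otimes 2}$, so squaring gives the preferred grading structure.

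For naturality, let $\Phi\colon(M,\omega,D)\to(M',\omega',D')$ be an isomorphism of symplectic log Calabi--Yau pairs. Then $\Phi^*J'$ is $\omega$-tame, and $\Phi$ identifies $K_{M'}^{-1}$ with $K_M^{-1}$ (built using $\Phi^*J'$). Because $\Phi(D)=D'$ as stratified divisors, $\Phi$ also pulls $\mathcal O(D')$ and $s_{D'}$ back to $\mathcal O(D)$ and $s_D$. So $\Phi^*\psi'$ is an admissible isomorphism for $M$, and $\Phi$ carries the trivialization built from $\psi'$ to the one built from $\Phi^*\psi'$. By the uniqueness step above, this is the preferred class on $M\setminus D$.

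The main obstacle is making the construction of $\mathcal O(D)$ and $s_D$ near the nodes and their pullback under $\Phi$ precise in the purely symplectic setting, without holomorphic defining functions. My first attempt would be to use that each component $D_i$ is a symplectic surface with a normal bundle, which gives $\mathcal O(D_i)$ with a section vanishing transversally on $D_i$, and to set $\mathcal O(D)=\bigotimes_i\mathcal O(D_i)$. Since the nodes are positive transverse intersections, the tensor product of these sections vanishes exactly on $D$. The space of such choices of local normal data is contractible, so any choices give homotopic trivializations, and this handles compatibility with $\Phi$.
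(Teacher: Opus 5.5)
Your proposal is correct and follows the paper's proof essentially step by step. Both arguments form $\mathcal O(D)=\bigotimes_i\mathcal O(D_i)$ with its canonical section, identify it with $K_M^{-1}$ using $c_1(TM)=\operatorname{PD}[D]$, and use $H^1(M;\mathbb Z)=0$ to make that identification unique up to homotopy. Naturality is then obtained in both by pulling back $J'$, $\mathcal O(D')$ and $s_{D'}$ along $\Phi$ and invoking this uniqueness.
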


\begin{proof}
Write
\[
D=D_1\cup\cdots\cup D_k
\]
and let
\[
\mathcal O(D):=
\mathcal O(D_1)\otimes\cdots\otimes\mathcal O(D_k)
\]
be the smooth complex line bundle associated to the oriented
codimension-two divisor $D$. It carries a canonical section $s_D$,
well defined up to homotopy, whose zero set is $D$, with multiplicity
one along each component.

Choose an $\omega$-compatible almost complex structure and denote its
canonical bundle by $K_M$. Since $(M,\omega,D)$ is a symplectic log
Calabi--Yau pair,
\[
c_1(K_M^{-1})=c_1(TM)=\operatorname{PD}[D]
=c_1\bigl(\mathcal O(D)\bigr).
\]
Consequently, there exists an isomorphism of complex line bundles
\[
\alpha\colon\mathcal O(D)\xrightarrow{\sim}K_M^{-1}.
\]
The section $\alpha(s_D)$ is nowhere vanishing on $M\setminus D$ and
therefore determines a trivialization of $K_M^{-1}$ there, or,
equivalently, a trivialization of $K_M|_{M\setminus D}$.

Any two choices of $\alpha$ differ by a map
\[
M\longrightarrow\mathbb C^*.
\]
The homotopy classes of such maps form $H^1(M;\mathbb Z)$, which
vanishes by assumption. The resulting homotopy class of
trivializations is therefore independent of $\alpha$. It is also
independent of the compatible almost complex structure, since the
space of compatible almost complex structures is contractible. This
defines the preferred grading structure.

If
\[
\Phi\colon(M,\omega,D)\xrightarrow{\sim}
(M',\omega',D')
\]
is an isomorphism of symplectic log Calabi--Yau pairs, then $\Phi$
identifies $\mathcal O(D)$ with $\Phi^*\mathcal O(D')$, including
their canonical sections up to homotopy, and, choosing an $\omega'$-compatible almost complex structure $J'$ on $M'$
and equipping $M$ with the compatible almost complex structure
$J=\Phi^*J'$, its differential
identifies $K_M^{-1}$ with $\Phi^*K_{M'}^{-1}$. Thus, the pullback by
$\Phi$ of the preferred grading on $M'\setminus D'$ is one of the
grading structures constructed above on $M\setminus D$. By uniqueness,
it is the preferred grading structure.
\end{proof}

We now restate Theorem \ref{thm-ray} in light of the previous sections and prove it.

\begin{theorem} Let $U$ be an affine log CY surface and $\mathcal{R}$ be a rational ray diagram associated to $U$ (meaning that it arises from applying the procedure that we described in the previous section). Then $U_\mathcal{R}$ is strongly exact symplectomorphic to $\widehat{U}$ in a way that identifies the grading structures fixed above.
\end{theorem}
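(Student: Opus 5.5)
The plan is to pass through a compact symplectic log Calabi--Yau pair on each side and compare them with the symplectic Torelli theorem \cite[Theorem 13]{LMN23}.

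\textbf{A-side (algebraic) pair.} Take the generalized toric model $(Y,D'')$ from Theorem \ref{thm-toric-model}. Equip $Y$ with the K\"ahler form $\omega_Y$ of Theorem \ref{thm-tri-pack}, which is the curvature of a positive Hermitian metric on the ample divisor $A=\sum\mu_iC_i''$. After a standard local perturbation near the nodes, $(Y,\omega_Y,D'')$ is an orthogonal symplectic log Calabi--Yau pair. The complement $(Y\setminus D'',\theta)$, with $d\theta=\omega_Y$ and $\theta$ coming from $-d^c\log\|s_A\|$, is then a Liouville domain interior whose completion is strongly exact symplectomorphic to $\widehat U$. This uses the usual Stein/Weinstein argument, for instance via McLean's or Eliashberg--Gromov's comparison for complements of ample normal crossings divisors.

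\textbf{Almost toric pair.} The compact almost toric base from Section~\ref{sec2} is obtained by the following sequence:
\begin{enumerate}
\item Start from $\Delta_X$, or from the Vianna triangle in Case~2.
\item Perform nodal trades, nodal slides and branch moves.
\item Perform the non-toric blowups along the disjoint Symington triangles.
\end{enumerate}
Together these produce a closed symplectic $4$-manifold $(M,\omega_M)$ with boundary divisor $D_M=\pi^{-1}(\partial B)$. By Symington's correspondences, nodal trades and slides do not change the symplectomorphism type, and a non-toric blowup along a Symington triangle of size $\mu_i$ is a symplectic blowup of weight $\mu_i$ at a point of the corresponding boundary component. It follows that $(M,\omega_M,D_M)$ is a symplectic log Calabi--Yau pair with the following properties:
\begin{itemize}
\item It is homological deformation equivalent to $(Y,D'')$, with the same self-intersections, the same component count and the same genus.
\item The areas of the boundary components are $l_i-k_i\mu_i$, matching Theorem \ref{thm-tri-pack}.
\item The exceptional classes have area $\mu_i$.
\end{itemize}
So the symplectic classes correspond under the natural identification of $H_2$.

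We also need the interior of the base to be modelled by the rational ray diagram. Radially shrinking the base toward $p$, where all eigenlines meet because the diagram is exact, exhibits $M\setminus D_M$ as the interior of a Liouville domain. Its completion is $U_\mathcal R$, with Liouville vector field the lift of the radial field. This is the construction preceding Proposition \ref{prop-R-Liouville-eq}, and the primitive $\theta$ on the complement restricts near $D_M$ to a standard Liouville form for the divisor complement.

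\textbf{Applying Torelli.} The symplectic Torelli theorem then gives an isomorphism of symplectic log Calabi--Yau pairs $\Phi:(M,\omega_M,D_M)\to(Y,\omega_Y,D'')$. Its hypotheses are that the homological data match, that the areas, i.e. the periods of $\omega$, agree, and that both pairs are orthogonal after deformation. Restricting $\Phi$ to complements yields an exact symplectomorphism after correcting by a compactly supported function. The two Liouville forms both come from the divisor, with the same ``wrapping numbers'' $\mu_i$ given by the affine distance from $p$ to the edges, which equals the coefficient of $A$. Hence, by the uniqueness of completions of divisor complements with fixed wrapping numbers (\cite{LMN25}-type results), the induced map on completions is strongly exact.

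\textbf{Gradings.} Both spaces are rational, so $H^1=0$. Lemma \ref{lem-preferred-grading} therefore shows that $\Phi$ matches the preferred gradings. It remains to check two identifications:
\begin{itemize}
\item On the algebraic side, the preferred grading is the one from the holomorphic volume form with poles on $D''$. Indeed, $\Omega$ is a section of $K_Y(D'')$ that is trivial.
\item On the almost toric side, the preferred grading makes the torus fibres Maslov zero. This holds because the fibres bound no disks meeting $D$ negatively; equivalently, the Maslov index of a disk equals twice its intersection with $D$, and the torus fibres lie in the complement.
\end{itemize}

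\textbf{Main obstacle.} I expect the hardest step to be the Liouville/strong-exactness comparison: ensuring that the Torelli isomorphism, which is only symplectic on the compact pair, upgrades to a strongly exact map of completions. My first attempt would be to arrange, via Theorem \ref{thm-tri-pack}, that the cohomology class of $\theta$ relative to $D$ (the wrapping numbers) coincides on both sides, and then invoke the standard result that divisor complements with equal wrapping vectors have deformation equivalent, hence strongly exact symplectomorphic, completions.
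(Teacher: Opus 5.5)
Your overall architecture matches the paper's. You orthogonalize the K\"ahler pair of Theorem~\ref{thm-tri-pack}, match $H_2$, the boundary classes and the areas, apply the symplectic Torelli theorem, and then compare Liouville structures and gradings via Lemma~\ref{lem-preferred-grading}. However, two steps are not justified as written.

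The first gap is the Liouville comparison. It is false that $\Phi|_U$ becomes strongly exact after a compactly supported correction. Since $H_1(Y)=0$, meridians generate $H_1(U;\mathbb Z)$, so equal wrapping numbers do make $\Phi^*\lambda_{\mathrm{AT}}-\lambda_Y$ exact. But its primitive need not be compactly supported, and controlling both Liouville fields near the divisor is the entire content of this step. Appealing to ``uniqueness for fixed wrapping numbers'' hides the difficulty. $\Phi$ is only an isomorphism of pairs, so nothing guarantees that $q\circ\Phi^{-1}$, with $q=\lVert s_A\rVert_h^2$ the function to which $\lambda_Y$ is adapted, behaves well in the toric action coordinates where $\lambda_{\mathrm{AT}}$ is standard. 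One only knows $q\circ\Phi^{-1}=\rho_1^{\mu_1}\rho_2^{\mu_2}u$ with $u>0$ smooth on the real oriented blow-up. The paper supplies the missing estimate in three moves:
\begin{enumerate}
\item From $Z_{\mathrm{AT}}=\sum_i(\rho_i-b_i)\partial_{\rho_i}$ it shows $Z_{\mathrm{AT}}(q\circ\Phi^{-1})<0$ on a punctured neighbourhood of $D_{\mathrm{AT}}$. This uses only $b_i,\mu_i>0$; the equality $b_i=\mu_i$ you stress is needed for the Torelli area matching, not here.
\item Hence $\{q\circ\Phi^{-1}\ge\epsilon\}$ is a common Liouville domain for $(\Phi^{-1})^*\lambda_Y$ and $\lambda_{\mathrm{AT}}$, and their linear interpolation is a compact Liouville deformation.
\item The strongly exact map is this deformation composed with $\Phi$, not $\Phi$ itself.
\end{enumerate}
Similarly, on the algebraic side $-d^c\log\lVert s_A\rVert^2$ stops being a primitive after orthogonalization. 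The paper handles this with $\lambda_{\mathrm{alg}}+\beta_s+dg_s$ and a parametric McLean/Li--Mak correction normalized by $g_0=0$.

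The second gap is in the gradings, where your argument is circular. For a disc $u$ with boundary on a fibre, $\mu(u)=\mu_\Omega(\partial u)+2\,u\cdot D$, where $\mu_\Omega$ is the Maslov index relative to the log trivialization. So the formula $\mu(u)=2\,u\cdot D$ is equivalent to the vanishing you want to prove, and the fact that no disc meets $D$ negatively is irrelevant. The paper checks the vanishing directly. Near a crossing, the preferred trivialization is homotopic to $\frac{dz_1}{z_1}\wedge\frac{dz_2}{z_2}$, which has constant phase on corner tori. In the elliptic case there are no corners. Instead, a fibre near $D_{\mathrm{AT}}$ is a meridian times an essential embedded circle in the torus $D_{\mathrm{AT}}$. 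That circle has rotation number zero for the flat framing given by $\operatorname{Res}_{D''}\Omega$ transported by $\Phi$. Connectedness of the regular locus then propagates Maslov zero to all fibres. You should also note that the Liouville-deformation maps are isotopic to the identity. Together with the family $\omega_{Y,s}$, they therefore carry preferred gradings to preferred gradings.
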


\begin{proof}
Let $B_{\mathrm{surg}}$ be the compact almost toric base obtained in the
construction of the associated eigenray diagram. It determines a closed
orthogonal symplectic log Calabi--Yau pair
\[
(M_{\mathrm{AT}},\omega_{\mathrm{AT}},D_{\mathrm{AT}}).
\]
Consider a generalized toric model of $U$ as in Theorem \ref{thm-toric-model} using the notation introduced prior to it. Choose the K\"ahler form $\omega_{Y,0}$ on the algebraic
compactification $(Y,D'')$ as in Theorem \ref{thm-tri-pack}. The triple $(Y, \omega_{Y,0}, D'')$ is also a symplectic log CY pair, but it need not be orthogonal.

We will now perturb the symplectic form to make the boundary divisor orthogonal, but we need to keep track of the Liouville structures, so let us introduce some notation. Consider the ample divisor supported on the boundary,
\[
A=\sum_i \mu_iC_i'',
\qquad
\mu_i\in\mathbb Z_{>0}.
\]
Let $s_A$ be the
canonical section of $\mathcal O_Y(A)$, and choose a Hermitian metric
$h$ whose Chern curvature is $\omega_{Y,0}$. On $U=Y\setminus D'',$ set
\[
q=\lVert s_A\rVert_h^2,
\qquad
\varphi=-\log q.
\]
Thus
\[
\lambda_{\mathrm{alg}}:=-d^c\varphi
\]
satisfies
\[
d\lambda_{\mathrm{alg}}=\omega_{Y,0}|_U.
\]

By the standard orthogonalization lemma
\cite[Lemma~2.3]{Gompf1995}, followed by isotopy extension and
pullback, there is a family of cohomologous symplectic forms
\[
\{\omega_{Y,s}\}_{s\in[0,1]}
\]
on \(Y\), with \(\omega_{Y,0}\) equal to the original K\"ahler form,
such that \(D''\) is \(\omega_{Y,s}\)-symplectic for every \(s\) and is
\(\omega_{Y,1}\)-orthogonal. We henceforth write
\[
\omega_Y:=\omega_{Y,1}.
\]
Since $[\omega_{Y,s}]=[\omega_{Y,0}]$
for every $s$, this deformation does not change any of the
symplectic-area calculations below.

Choose a smooth family
\(\beta_s\in\Omega^1(Y)\), with \(\beta_0=0\), such that
\[
\omega_{Y,s}=\omega_{Y,0}+d\beta_s,
\]
and set
\[
\theta_s=\lambda_{\mathrm{alg}}+\beta_s|_U.
\]
The forms \(\theta_s\) have the same negative wrapping numbers around
the components of \(D''\). Moreover, since $s_A$ vanishes along $C_i''$ with multiplicity
$\mu_i$, the function $-\varphi=\log\lVert s_A\rVert_h^2$ is
compatible with $D''$ in the sense of McLean
\cite[page~26]{McLean2016}. The negative-wrapping version of McLean's construction
\cite[Proposition~2.5]{LiMakCapping}, applied parametrically as in
\cite[Proposition~5.10]{McLean2016},
provides a smooth family of functions \(g_s\colon U\to\mathbb{R}\)
such that, if \(Z_s\) is the Liouville vector field of
\(\theta_s+dg_s\), then
\[
d\varphi(Z_s)>0
\]
throughout a sufficiently small punctured neighborhood of \(D''\),
uniformly in \(s\). 

Set
\[
\widetilde{\theta}_s:=\theta_s+dg_s.
\]
Since the assertion already holds for $\lambda_{\mathrm{alg}}$ at
$s=0$, the parametrized construction may be performed relative to
$s=0$, and hence we may assume that $g_0=0$. Choose $C$ sufficiently
large that
\[
W_C:=\{\varphi\leq C\}
\]
is a Liouville domain for $\widetilde{\theta}_s$ for every
$s\in[0,1]$. The standard completion equivalence theorem for a
compact Liouville deformation (i.e. \cite[Proposition 11.8]{CE}) then shows that the completions of
$(W_C,\widetilde{\theta}_0)$ and $(W_C,\widetilde{\theta}_1)$ are
strongly exact symplectomorphic. Define
\[
\lambda_Y:=\widetilde{\theta}_1
=\lambda_{\mathrm{alg}}+\beta_1|_U+dg_1.
\]
Thus $d\lambda_Y=\omega_Y|_U$.

We apply the symplectic Torelli theorem \cite[Theorem 13]{LMN23} for orthogonal symplectic log
Calabi--Yau pairs. Let $X$ be the toric starting surface, and let
$E_{i,k}$ denote the exceptional classes of the non-toric blow-ups.
On the almost toric side, let $V_{i,k}$ be the corresponding
exceptional spheres arising from the Symington triangles. There are
marked decompositions
\[
H_2(Y;\mathbb Z)
\cong
H_2(X;\mathbb Z)
\oplus
\bigoplus_{i=1}^{m}\bigoplus_{k=1}^{k_i}
\mathbb Z[E_{i,k}]
\]
and
\[
H_2(M_{\mathrm{AT}};\mathbb Z)
\cong
H_2(X;\mathbb Z)
\oplus
\bigoplus_{i=1}^{m}\bigoplus_{k=1}^{k_i}
\mathbb Z[V_{i,k}].
\]
Define
\[
\Psi\colon
H_2(Y;\mathbb Z)
\longrightarrow
H_2(M_{\mathrm{AT}};\mathbb Z)
\]
to be the identity on $H_2(X;\mathbb Z)$ and to send
$[E_{i,k}]$ to $[V_{i,k}]$. Since the two constructions perform
symplectic blow-ups of the same toric starting surface, $\Psi$
preserves the intersection form. In the normal-crossings case it also
preserves the individual boundary classes:
\[
\Psi([C_i''])
=
\Psi\left(
[C_i]-\sum_{k=1}^{k_i}[E_{i,k}]
\right)
=
[C_i]-\sum_{k=1}^{k_i}[V_{i,k}]
=
[D_{\mathrm{AT},i}].
\]
In the elliptic case, the same calculation after summing the toric
boundary classes gives
\[
\Psi([D''])=[D_{\mathrm{AT}}].
\]

The symplectic areas also agree. The classes coming from $X$ have
areas equal to the affine lengths of the corresponding edges of the
initial moment polygon, while
\[
\int_{E_{i,k}}\omega_Y
=
\mu_i
=
\int_{V_{i,k}}\omega_{\mathrm{AT}}.
\]
Consequently,
\[
\Psi^*[\omega_{\mathrm{AT}}]=[\omega_Y].
\]

The symplectic Torelli theorem therefore gives an isomorphism of symplectic log Calabi-Yau pairs
\[
\Phi\colon
(Y,\omega_Y,D'')
\xrightarrow{\sim}
(M_{\mathrm{AT}},\omega_{\mathrm{AT}},D_{\mathrm{AT}}).
\]

Notice that $\Phi\colon Y\to M_{\mathrm{AT}}$ lifts to a
diffeomorphism between the manifolds with corners obtained by taking
the iterated real oriented blow-ups along the components of the
boundary divisors. 

Put
\[
U_{\mathrm{AT}}
=
M_{\mathrm{AT}}\setminus D_{\mathrm{AT}},
\qquad
q_{\mathrm{pull}}
=
q\circ\Phi^{-1},
\qquad
\lambda_{\mathrm{pull}}
=
(\Phi^{-1})^*\lambda_{\mathrm{Y}}.
\]
Let $\lambda_{\mathrm{AT}}$ and $Z_{\mathrm{AT}}$ denote the
Liouville form and Liouville vector field arising from the radial
Liouville structure of the exact eigenray diagram. We claim that
\[
dq_{\mathrm{pull}}(Z_{\mathrm{AT}})<0
\]
throughout a sufficiently small punctured neighborhood of
$D_{\mathrm{AT}}$.

Consider first a crossing of two components of $D_{\mathrm{AT}}$.
Let $\rho_1,\rho_2$ be the standard action coordinates given by the
affine distances from the two adjacent edges, and write
\[
\rho_i=\frac{r_i^2}{2}.
\]
Since $\Phi$ lifts to the real oriented blow-ups and
$q=\lVert s_A\rVert_h^2$ vanishes to order $2\mu_i$ in a normal
radial coordinate along the $i$th component, we may write
\[
q_{\mathrm{pull}}
=
\rho_1^{\mu_1}\rho_2^{\mu_2}u
\]
near the crossing, where $u$ is positive and smooth on the
corresponding real oriented blow-up.

In these action coordinates, the radial Liouville vector field is
\[
Z_{\mathrm{AT}}
=
(\rho_1-b_1)\frac{\partial}{\partial\rho_1}
+
(\rho_2-b_2)\frac{\partial}{\partial\rho_2},
\]
where
\(
b_i=\rho_i(p)
\)
is the affine distance from the distinguished interior point $p$ to
the $i$th boundary edge. By the choice of the moment polygon in
Theorem~\ref{thm-tri-pack}, this distance is equal to $\mu_i$; hence
\[
b_i=\mu_i.
\]
We retain the notation $b_i$ in the following calculation only to
distinguish its role as a coordinate of $p$ from the role of $\mu_i$
as the vanishing multiplicity of $q_{\mathrm{pull}}$. Equivalently,
\[
Z_{\mathrm{AT}}
=
\left(-\frac{b_1}{r_1}+\frac{r_1}{2}\right)
\frac{\partial}{\partial r_1}
+
\left(-\frac{b_2}{r_2}+\frac{r_2}{2}\right)
\frac{\partial}{\partial r_2}.
\]
Since $u$ is smooth on the real oriented
blow-up, its first partial derivatives with respect to $r_1,r_2$ (and the angle coordinates) are bounded. It follows from the
displayed formula that, for some $C>0$,
\[
\left|Z_{\mathrm{AT}}(u)\right|
\leq
C\left(1+\frac{1}{r_1}+\frac{1}{r_2}\right).
\]

Direct differentiation gives
\[
Z_{\mathrm{AT}}(q_{\mathrm{pull}})
=
\rho_1^{\mu_1-1}\rho_2^{\mu_2-1}B,
\]
where
\[
B
=
\rho_1\rho_2
\bigl(Z_{\mathrm{AT}}(u)+(\mu_1+\mu_2)u\bigr)
-\mu_1b_1\rho_2u-\mu_2b_2\rho_1u.
\]
Since $b_i>0$ and $u$ is positive, after shrinking the neighborhood
there is a constant $c>0$ such that
\[
\mu_i b_i u\geq c,
\qquad i=1,2.
\]
Consequently, after modifying the constants if necessary,
\[
B
\leq
-c(r_1^2+r_2^2)
+
C(r_1^2r_2+r_1r_2^2)
+
C'r_1^2r_2^2
\]
for some $C,C'>0$. If $0<r_1,r_2\leq\delta$, then
\[
B
\leq
\bigl(-c+C\delta+C'\delta^2\bigr)(r_1^2+r_2^2).
\]
Choosing $\delta>0$ sufficiently small therefore gives
\[
dq_{\mathrm{pull}}(Z_{\mathrm{AT}})
=
Z_{\mathrm{AT}}(q_{\mathrm{pull}})<0
\]
near the crossing.

Near a smooth point of the $i$-th boundary component, the same
argument begins with
\[
q_{\mathrm{pull}}=\rho_i^{\mu_i}u
\]
and gives the desired inequality directly. Since
$D_{\mathrm{AT}}$ is compact, these local neighborhoods may be chosen
to cover a sufficiently small punctured neighborhood of the entire
divisor.

On the other hand, since
\[
q_{\mathrm{pull}}
=
\exp\bigl(-\varphi\circ\Phi^{-1}\bigr),
\]
the construction of $\lambda_Y$ gives
\[
dq_{\mathrm{pull}}(Z_{\mathrm{pull}})
=
-q_{\mathrm{pull}}\,
d\bigl(\varphi\circ\Phi^{-1}\bigr)(Z_{\mathrm{pull}})
<0
\]
near $D_{\mathrm{AT}}$.

Hence, by an elementary compactness argument, we can find $\epsilon>0$ sufficiently small so that
\[
W_\epsilon:=\{q_{\mathrm{pull}}\geq\epsilon\}
\] is a common Liouville domain for $\lambda_{\mathrm{pull}}$ and
$\lambda_{\mathrm{AT}}$. Then, for
\[
\Lambda_t
:=(1-t)\lambda_{\mathrm{pull}}+t\lambda_{\mathrm{AT}},
\qquad t\in[0,1],
\]
we have $d\Lambda_t=\omega_{\mathrm{AT}}$, and the corresponding
Liouville vector field is
\[
Z_t=(1-t)Z_{\mathrm{pull}}+tZ_{\mathrm{AT}}.
\]
Consequently,
\[
dq_{\mathrm{pull}}(Z_t)<0
\]
along $\partial W_\epsilon$ for every $t$. Thus
$\{\Lambda_t\}_{t\in[0,1]}$ is a compact Liouville deformation, and
\cite[Proposition~11.8]{CE} gives a strongly exact symplectomorphism
between its completed endpoints. Composing this with the preceding
completion equivalence and with $\Phi$ gives
\[
\widehat U\simeq U_{\mathcal R}
\]
by a strongly exact symplectomorphism.

It remains to discuss the grading. Both $Y$ and $M_{\mathrm{AT}}$
are rational surfaces, and hence
\[
H^1(Y;\mathbb Z)=H^1(M_{\mathrm{AT}};\mathbb Z)=0.
\]
Lemma~\ref{lem-preferred-grading} therefore equips the complements of their boundary divisors
with preferred grading structures.

We first compare these preferred gradings with the grading data fixed
previously. Let $\Omega$ be the non-vanishing holomorphic volume form
on $U$ with simple poles along $D''$. Its inverse extends across $D''$
to an anticanonical section vanishing with multiplicity one along each
component of $D''$. Consequently, the grading datum $[\Omega]$ used
to grade $\widehat U$ is precisely the preferred grading associated
to $(Y,\omega_{Y,0},D'')$. Along the deformation
$\{\omega_{Y,s}\}_{s\in[0,1]}$, these preferred grading structures
form a continuous family, and hence the first completion
symplectomorphism carries this grading to the preferred grading
associated to $(Y,\omega_Y,D'')$.

By Lemma~\ref{lem-preferred-grading}, the Torelli symplectomorphism
\[
\Phi\colon(Y,\omega_Y,D'')
\xrightarrow{\sim}
(M_{\mathrm{AT}},\omega_{\mathrm{AT}},D_{\mathrm{AT}})
\]
carries this grading to the preferred grading on
$U_{\mathrm{AT}}$. This is also the grading on $U_{\mathcal R}$
specified previously. Indeed, near a crossing of the boundary, the
preferred trivialization is homotopic to
\[
\frac{dz_1}{z_1}\wedge\frac{dz_2}{z_2},
\]
which has constant phase on a crossing torus. Therefore such fibers have Maslov class zero. In the elliptic case, the residue framing on $D_{\mathrm{AT}}$
is homotopic to the framing obtained by pulling back
$\eta=\operatorname{Res}_{D''}\Omega$ along
$\Phi^{-1}|_{D_{\mathrm{AT}}}$.
An essential embedded circle in $D_{\mathrm{AT}}$ therefore corresponds
to an essential embedded circle in $D''$. In the flat structure
determined by $\eta$, such a circle is isotopic through embedded circles
to a straight circle, and hence has rotation number zero in the residue
framing. A regular fiber sufficiently close to $D_{\mathrm{AT}}$ is the
product of a normal meridian with such an essential circle, and the
normal meridian has constant phase with respect to the logarithmic
trivialization. Hence this fiber has Maslov class zero. Since the regular
locus of the almost toric base is connected, parallel transport along
paths in the regular locus gives Lagrangian isotopies between regular
fibers. Hence every regular fiber has Maslov class zero in both cases. By the uniqueness statement
following Proposition \ref{prop-R-Liouville-eq}, the preferred grading is precisely the
grading datum previously assigned to $U_{\mathcal R}$.

Finally, the strongly exact symplectomorphisms arising from the two
Liouville deformations are induced by diffeotopies starting at the
identity and therefore preserve the corresponding homotopy classes
of grading structures. Their composition with $\Phi$ gives the
required grading-preserving strongly exact symplectomorphism.

\end{proof}

\appendix
\section{Computation of the Minimal Boundary Cycle}

In this appendix, we explicitly resolve the toric surface constructed
for Case~2(2), apply the non-toric blowup recipe, and perform the
boundary blowdown cascade to prove that the resulting open surface is
isomorphic to the complement of a singular fiber of Kodaira type
$I_d$, where $d=9-k$, in a rational elliptic surface.

\subsection*{A.1. The Fan and Cone Determinants}
The complete toric fan $\Sigma_k$ in $N_\mathbb{R} \cong \mathbb{R}^2$ is generated by the four primitive rays derived from the rational ray diagram:
\begin{align*}
    R_1 &= (1, 0) \\
    R_2 &= (7, 1) \\
    R_3 &= (-41, -5) \\
    R_4 &= (1 - 2k, -2)
\end{align*}
Arranged in counterclockwise order, the four 2-dimensional cones $\sigma_{ij} = \text{Cone}(R_i, R_j)$ have determinants:
\begin{itemize}
    \item $\Delta_{12} = \det(R_1, R_2) = (1)(1) - (0)(7) = 1$
    \item $\Delta_{23} = \det(R_2, R_3) = (7)(-5) - (1)(-41) = -35 + 41 = 6$
    \item $\Delta_{34} = \det(R_3, R_4) = (-41)(-2) - (-5)(1-2k) = 82 + 5 - 10k = 87 - 10k$
    \item $\Delta_{41} = \det(R_4, R_1) = (1-2k)(0) - (-2)(1) = 2$
\end{itemize}

\subsection*{A.2. Hirzebruch-Jung Resolution of Toric Singularities}
We resolve the singular cones by inserting interior rays such that every adjacent pair has determinant $1$, yielding a smooth toric surface $\widetilde{Y}_{\Sigma_k}$. The self-intersections of the corresponding boundary curves are denoted by $C_{r_i}^2 = -b_i$, where $r_{i-1} + r_{i+1} = b_i r_i$.

\paragraph{Cone 12 ($\Delta_{12} = 1$):} 
Smooth. No interior rays are needed.

\paragraph{Cone 41 ($\Delta_{41} = 2$):} 
Requires 1 interior ray $T_1$:
\begin{equation}
    T_1 = (1-k, -1)
\end{equation}
Toric self-intersection: $R_4 + R_1 = (2-2k, -2) = 2 T_1 \implies C_{T_1}^2 = -2$.

\paragraph{Cone 23 ($\Delta_{23} = 6$):} 
The Hirzebruch-Jung continued fraction $\frac{6}{5} = [2,2,2,2,2]$ yields 5 interior rays $S_1, \dots, S_5$:
\begin{equation}
    S_1 = (-1, 0), \quad S_2 = (-9, -1), \quad S_3 = (-17, -2), \quad S_4 = (-25, -3), \quad S_5 = (-33, -4)
\end{equation}
Every $S_i$ satisfies $S_{i-1} + S_{i+1} = 2 S_i \implies C_{S_i}^2 = -2$.

\paragraph{Cone 34 ($\Delta_{34} = 87 - 10k$):} 
For $k < 8$, the boundary of the convex hull of integer points generates a sequence of $9-k$ intermediate rays $W_j$ strictly along the line $y = -1$:
\begin{equation}
    W_j = (j-9, -1) \quad \text{for } j \in \{1, \dots, 9-k\}
\end{equation}
Toric self-intersections:
\begin{itemize}
    \item At $W_1$: $R_3 + W_2 = (-41, -5) + (-7, -1) = (-48, -6) = 6 W_1 \implies C_{W_1}^2 = -6$.
    \item At internal $W_j$: $W_{j-1} + W_{j+1} = 2 W_j \implies C_{W_j}^2 = -2$.
    \item At $W_{9-k}$: $W_{9-k-1} + R_4 = (-k-1, -1) + (1-2k, -2) = (-3k, -3) = 3 W_{9-k} \implies C_{W_{9-k}}^2 = -3$.
\end{itemize}
For the case $k=8$, there is exactly 1 interior ray $W_1 = (-8, -1)$ with $C_{W_1}^2 = -7$.

\paragraph{Main Rays:}
\begin{itemize}
    \item $R_1$: $T_1 + R_2 = (1-k, -1) + (7, 1) = (8-k, 0) = (8-k) R_1 \implies C_{R_1}^2 = -(8-k) = k-8$.
    \item $R_2$: $R_1 + S_1 = (1, 0) + (-1, 0) = (0, 0) = 0 \cdot R_2 \implies C_{R_2}^2 = 0$.
    \item $R_3$: $S_5 + W_1 = (-33, -4) + (-8, -1) = (-41, -5) = 1 \cdot R_3 \implies C_{R_3}^2 = -1$.
    \item $R_4$: $W_{9-k} + T_1 = (-k, -1) + (1-k, -1) = (1-2k, -2) = 1 \cdot R_4 \implies C_{R_4}^2 = -1$.
\end{itemize}

\subsection*{A.3. Non-Toric Blowups}
We apply $k$ non-toric blowups on the branch cut ray $R_1$ and $1$ blowup on each of $R_2, R_3,$ and $R_4$. This reduces their self-intersections to:
\begin{itemize}
    \item $R_1: (k - 8) - k = -8$
    \item $R_2: 0 - 1 = -1$
    \item $R_3: -1 - 1 = -2$
    \item $R_4: -1 - 1 = -2$
\end{itemize}
For $k<8$, the full cyclic boundary array of self-intersections is:
\begin{equation}
    [\mathbf{R_1(-8)}, \mathbf{R_2(-1)}, S_1(-2), \dots, S_5(-2), \mathbf{R_3(-2)}, W_1(-6), \dots, W_{9-k}(-3), \mathbf{R_4(-2)}, T_1(-2)]
\end{equation} For $k=8,$ the $W$ chain contains only one curve of self-intersection $-7.$

\subsection*{A.4. Iterative Blowdown Cascade}
We iteratively contract boundary $(-1)$-curves. When $C_i^2 = -1$ is contracted, its neighbors increment: $C_{i-1}^2 \mapsto C_{i-1}^2 + 1$ and $C_{i+1}^2 \mapsto C_{i+1}^2 + 1$. We go through the process for $k<8$ first.

\paragraph{Phase 1: Forward cascade through the $S$-sequence.}
The single $(-1)$-curve is initially at $R_2$.
\begin{enumerate}
    \item Contract $R_2(-1)$: $R_1 \to -7$, $S_1 \to -1$.
    \item Contract $S_1(-1)$: $R_1 \to -6$, $S_2 \to -1$.
    \item Contract $S_2(-1)$: $R_1 \to -5$, $S_3 \to -1$.
    \item Contract $S_3(-1)$: $R_1 \to -4$, $S_4 \to -1$.
    \item Contract $S_4(-1)$: $R_1 \to -3$, $S_5 \to -1$.
    \item Contract $S_5(-1)$: $R_1 \to -2$, $R_3 \to -1$.
    \item Contract $R_3(-1)$: $R_1 \to \mathbf{-1}$, $W_1 \to \mathbf{-5}$.
\end{enumerate}
The surviving sequence is now:
\begin{equation}
    [\mathbf{R_1(-1)}, \mathbf{W_1(-5)}, W_2(-2), \dots, W_{9-k}(-3), R_4(-2), \mathbf{T_1(-2)}]
\end{equation}

\paragraph{Phase 2: $R_1$ Contraction.}
\begin{enumerate}\setcounter{enumi}{7}
    \item Contract $R_1(-1)$: Neighbors $T_1$ and $W_1$ increment:
    $$T_1 \to \mathbf{-1}, \quad W_1 \to -5 + 1 = \mathbf{-4}$$
\end{enumerate}
The sequence is:
\begin{equation}
    [\mathbf{W_1(-4)}, W_2(-2), \dots, \mathbf{W_{9-k}(-3)}, \mathbf{R_4(-2)}, \mathbf{T_1(-1)}]
\end{equation}

\paragraph{Phase 3: Backward Wrap-Around.}
\begin{enumerate}\setcounter{enumi}{8}
    \item Contract $T_1(-1)$: Neighbors $R_4$ and $W_1$ increment:
    $$R_4 \to -2 + 1 = \mathbf{-1}, \quad W_1 \to -4 + 1 = \mathbf{-3}$$
    \item Contract $R_4(-1)$: Neighbors $W_{9-k}$ and $W_1$ increment. 
    For $k < 8$, where $W_{9-k} \neq W_1$, both curves increment independently:
    $$W_{9-k} \to -3 + 1 = \mathbf{-2}, \quad W_1 \to -3 + 1 = \mathbf{-2}$$
\end{enumerate}

 All $(-1)$-curves have been eliminated, and the cascade halts. The only surviving curves are exactly the $W$-sequence:
\begin{equation}
    W_1, W_2, \dots, W_{9-k}
\end{equation}

 For \(k=8\), the same process brings us after Step~9 to the
2-cycle \([W_1(-4),R_4(-1)]\). Here \(W_1\cdot R_4=2\). Contracting
\(R_4\) therefore changes the self-intersection of \(W_1\) to
\[
-4+(W_1\cdot R_4)^2=-4+4=0,
\]
and identifies the two intersection points. Thus the image of \(W_1\)
is an irreducible nodal rational curve of self-intersection \(0\), namely
an \(I_1\)-fiber.

\subsection*{A.5. The period calculation.}

The self-intersection computation above determines the dual graph of the
boundary, but this alone does not imply that the resulting pair is a
rational elliptic surface: a cycle of $(-2)$-curves may have a nontrivial
period.  We therefore use the following additional input: for the anticanonical divisor $D$ obtained after the
blowdown cascade, the period is trivial, equivalently
\[
\mathcal{O}_{D}(D)\cong \mathcal{O}_{D}.
\]
We recall briefly the period invariant of a Looijenga pair.  Let
\[
D=D_1+\cdots+D_n
\]
be an oriented cycle of rational curves and set
\[
\Lambda_Y
=
\bigl\{
L\in\operatorname{Pic}(Y)
\;\big|\;
L\cdot D_i=0\text{ for every }i
\bigr\}.
\]
The orientation of the cycle gives a canonical identification
\[
\operatorname{Pic}^0(D)\cong\mathbb G_m.
\]
More explicitly, if \(q_i=D_i\cap D_{i+1}\) and
\(L\in\operatorname{Pic}^0(D)\), choose a nonzero trivializing section
\(\sigma_i\) of \(L|_{D_i}\).  The element
\[
\lambda(L)
=
\prod_i
\frac{\sigma_{i+1}(q_i)}{\sigma_i(q_i)}
\in\mathbb G_m
\]
is independent of the chosen trivializations and identifies
\(\operatorname{Pic}^0(D)\) with \(\mathbb G_m\).  In particular,
\(L\cong\mathcal O_D\) if and only if \(\lambda(L)=1\).  For an
irreducible nodal rational curve the same definition is made using the
two inverse images of the node in its normalization.

The period homomorphism of the pair is
\[
\varphi_Y:\Lambda_Y\longrightarrow\operatorname{Pic}^0(D),
\qquad
L\longmapsto L|_D.
\]
Thus the self-intersection sequence determines only the multidegree of
\(L|_D\), whereas the period records the remaining gluing parameter.
In particular, if \(D\cdot D_i=0\) for every \(i\), then
\[
\varphi_Y([D])=\mathcal O_D(D).
\]

\begin{lemma}
For the Looijenga pair constructed above by performing all non-toric
blow-ups at the points \(-1\), the period homomorphism is trivial.
Consequently,
\[
\mathcal O_D(D)\cong\mathcal O_D
\]
for the boundary obtained after the blowdown cascade.
\end{lemma}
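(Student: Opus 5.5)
We compute the period directly on the blown-up toric surface \(\widetilde Y\), before any boundary blowdown, and then transport the answer. Two facts from GHK-type period theory do the work: the period is well behaved under toric blowups and boundary blowdowns, and for a toric surface blown up at boundary points it has an explicit formula in terms of the blowup points.

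\emph{Step 1: the lattice \(\Lambda_{\widetilde Y}\).} Let \(\widetilde Y\to \widetilde Y_{\Sigma_k}\) be the blowup of the smooth toric surface at the infinitely near points over \(-1\in D_j^\circ\). A class \(L\in\operatorname{Pic}(\widetilde Y)\) is written as \(L=\pi^*L_0-\sum a_{j,\ell}E_{j,\ell}\), with \(L_0\) toric. The condition \(L\cdot \widetilde D_i=0\) for all boundary components forces the multidegree of \(L|_{\widetilde D}\) to vanish. So \(L|_{\widetilde D}\) lies in \(\operatorname{Pic}^0\cong\mathbb G_m\), and we must compute its gluing parameter \(\lambda\).

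\emph{Step 2: the explicit formula.} For a toric line bundle restricted to the toric boundary cycle, the standard torus-invariant trivializations glue with all transition factors equal to \(1\). Hence \(\lambda(\pi^*L_0|_D)=1\), up to the sign and orientation conventions built into \(\lambda\).

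Each exceptional curve \(E_{j,1}\) meets the strict transform of \(D_j\) at the point over \(-1\). The bundle \(\mathcal O(E_{j,1})|_{D_j}\) is then \(\mathcal O_{\mathbb P^1}(p_j)\) with \(p_j=-1\) in the toric coordinate on \(D_j^\circ\). Compared against the toric trivialization, it contributes the factor \(p_j^{\pm1}=(-1)^{\pm1}\) to \(\lambda\). The infinitely near blowups are centred at \(E\cap D_j\), so on \(D_j\) the exceptional classes \(E_{j,\ell}\) each restrict to the same point \(-1\) and contribute the same factor.

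Combining these, we get the formula (GHK)
\[
\varphi(L)=\prod_{j,\ell}(-1)^{a_{j,\ell}}\cdot\epsilon(L_0),
\]
where \(\epsilon(L_0)\) is a toric sign. This sign appears because the identification \(D_j^\circ\cong\mathbb G_m\) introduces a sign \((-1)^{L_0\cdot D_j}\)-type discrepancy between the natural gluing coordinates at the two nodes of \(D_j\). The paper's choice of \(-1\) is precisely what cancels it. Equivalently, \(-1\) is the point where the unique equivariant-normalized section of the degree-one bundle on \(D_j\) vanishes: the parameter \(-1\) corresponds to the ``trivial period'' point in GHK's normalization, cf. the statement in GHK that the period point \(\phi(E_{j})=\) the coordinate of the blown-up point times \((-1)\).

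Checking that these signs cancel is the main obstacle. The plan is to verify it on a single component, by computing \(\lambda\) for \(L=\pi^*L_0-E_{j,1}\) with \(L_0\) a toric divisor of degree \(1\) on \(D_j\) and \(0\) elsewhere (possible after toric blowups). Linearity then gives all of \(\Lambda\), since every element is an integer combination of such classes together with classes of degree zero on each component.

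\emph{Step 3: transport to the blowdown.} Contracting a boundary \((-1)\)-curve \(C\) identifies \(\Lambda_{\widetilde Y}\) with \(\Lambda\) of the contracted pair via pullback. The cycle \(D\) maps to a cycle \(D'\) whose \(\operatorname{Pic}^0\) is canonically the same \(\mathbb G_m\), because \(C\) is a rational bridge on which any degree-zero bundle is trivial. Hence periods are preserved along the cascade of A.4, including the final \(k=8\) step, where two points of \(W_1\) are glued into a node and the nodal-curve definition of \(\lambda\) matches. Finally, after the cascade \(D\cdot D_i=0\) for every \(i\), so \([D]\in\Lambda\), and triviality of \(\varphi\) gives \(\mathcal O_D(D)\cong\mathcal O_D\).
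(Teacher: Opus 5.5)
The gap is the toric half of Step~2, and that half is the entire content of the lemma: why blowing up at $-1$, rather than at any other point of $D_j^\circ$, makes the period trivial. Your Step~3 is fine and matches the paper: the period is unchanged under boundary blow-downs, and $[D]\in\Lambda$ once $D\cdot D_i=0$ for all $i$. Your observation that each total exceptional class $E_{j,\ell}$ restricts to $\mathcal{O}_{D_j}(p_j)$, with $p_j=-1$, is also correct.

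In Step~2 you assert $\lambda(\pi^*L_0|_D)=1$ ``up to sign and orientation conventions''. This is not even defined, since $\pi^*L_0|_D$ generally has nonzero multidegree. You then introduce an unspecified toric sign $\epsilon(L_0)$, claim that $-1$ cancels it, and defer the verification as ``the main obstacle''. But the lemma is exactly a sign statement. Take $\mathbb{P}^2$ with one blow-up on each side of the triangle: $H-E_1-E_2-E_3\in\Lambda$ has trivial period iff the three centres are collinear. The $-1$ points $(0:1:-1)$, $(1:0:-1)$, $(1:-1:0)$ lie on $x+y+z=0$, while the $+1$ points do not (their determinant is $2$). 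So nothing can be left ``up to sign''.

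Your proposed check also cannot be carried out. For any line bundle $L_0$ on a smooth complete toric surface with rays $v_i$, intersecting with $\operatorname{div}(\chi^m)\sim 0$ gives $\sum_i (L_0\cdot D_i)\,v_i=0$. So no toric $L_0$ has degree $1$ on $D_j$ and degree $0$ on all other components, on $X$ or on any toric blow-up of it. Hence the classes $\pi^*L_0-E_{j,1}$ are never in $\Lambda$. The linearity argument is also circular, since the classes of degree zero on every component are $\Lambda$ itself.

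The missing idea is the \emph{marked} period, which the paper uses through Lemma~2.8(1) and Proposition~2.9 of Gross--Hacking--Keel. The marking $p_j=-1$ extends $\varphi$ to the homomorphism $L\mapsto L|_D\otimes\mathcal{O}_D\bigl(-\sum_j (L\cdot D_j)p_j\bigr)$ on all of $\operatorname{Pic}(Y)$, and this can be checked on generators. Your observation kills it on the $E_{j,\ell}$. On $\pi^*\operatorname{Pic}(X)$ one needs $L_0|_D\cong\mathcal{O}_D\bigl(\sum_j (L_0\cdot D_j)p_j\bigr)$, which you must cite or prove.

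Here is one proof. It suffices to take $L_0$ ample with moment polygon $P$. Let $d_j=L_0\cdot D_j$ be the lattice length of the edge $P_j$. Take the section $\sum_m c_m\chi^m$ with $c_m=\binom{d_j}{i}$ at the $i$-th lattice point of $P_j$ and arbitrary interior coefficients. This is well defined because the prescription equals $1$ at every vertex. In the trivialization given by a vertex monomial it restricts to $D_j$ as $(1+z_j)^{d_j}$, so it cuts out exactly $\sum_j d_jp_j$ on $D$. This is where $-1$ matters: the analogous prescription $(1-z_j)^{d_j}$ for the $+1$ points is inconsistent around the cycle when $\sum_j d_j$ is odd. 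Restricting the marked period to $\Lambda$ then gives the lemma.
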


\begin{proof}
By \cite[Lemma~2.8(1) and Proposition~2.9]{GHKModuli}, the marking
of the toric boundary by the points corresponding to $-1$ has trivial
marked period, and this remains true after any sequence of possibly
infinitely near non-toric blow-ups centered at these points. Restricting
the marked period to $\Lambda_Y$ shows that the ordinary period
homomorphism is trivial. Since the period homomorphism is invariant under
corner blow-ups and corner blow-downs, the same holds after the boundary
blowdown cascade.
\end{proof}

\subsection*{A.6. Conclusion}

Since \(Y\) is rational, \(H^1(Y,\mathcal O_Y)=0\).  Applying
cohomology to
\[
0\longrightarrow\mathcal O_Y
\longrightarrow\mathcal O_Y(D)
\longrightarrow\mathcal O_D(D)
\longrightarrow 0
\]
and using \(\mathcal O_D(D)\cong\mathcal O_D\) gives
\[
h^0(Y,\mathcal O_Y(D))=2.
\]
Let \(s_D\) be the section cutting out \(D\), and let \(s_1\) be a
section whose restriction to \(D\) is \(1\).  The sections \(s_D\) and
\(s_1\) have no common zero: \(s_D\) is nonzero away from \(D\), while
\(s_1\) is nonzero along \(D\).  Hence
\[
|D|=|-K_Y|
\]
is a basepoint-free pencil.  Adjunction shows that its general member
has genus one.  Moreover, since \(Y\) is rational and \(K_Y^2=0\), it
contains a \((-1)\)-curve \(E\); adjunction gives
\[
D\cdot E=-K_Y\cdot E=1,
\]
so \(E\) is a section of the pencil.  Moreover, every $(-1)$-curve $E'\subset Y$ satisfies
$D\cdot E'=1$, so no $(-1)$-curve is contained in a fiber.
Consequently, the elliptic fibration is relatively minimal. Thus $Y$ is a rational elliptic surface and $D$ is a singular fiber
of Kodaira type $I_{9-k}$. Consequently, the resulting open surface
is the complement of an $I_{9-k}$-fiber in a rational elliptic
surface, for every $k\in\{0,\ldots,8\}$.

\end{document}